\newtheorem{theorem}{Theorem}[section]
\numberwithin{equation}{section}
\newtheorem{lemma}[theorem]{Lemma}
\newtheorem{proposition}[theorem]{Proposition}
\newtheorem{corollary}[theorem]{Corollary}
\newtheorem{remark}[theorem]{Remark}
\newtheorem{claim}[theorem]{Claim}
\numberwithin{equation}{section}
\def\Z{\mathbb{Z}}
\def\R{\mathbb{R}}
\def\bP{\mathbb{P}}
\def\F{\mathcal{F}}
\def\B{\mathcal{B}}
\def\LL{\mathcal{L}}
\def\bP{\mathbb{P}}
\def\H{\mathcal{H}}
\renewcommand{\phi}{\varphi}
\renewcommand{\epsilon}{\varepsilon}
\def\K{\mathcal{K}}
\def\GG{\mathcal{G}}
\newcommand{\1}{{\text{\Large $\mathfrak 1$}}}
\newcommand{\til}{\widetilde}
\newcommand{\pr}[1]{\mathbb{P}\!\left(#1\right)}
\newcommand{\E}[1]{\mathbb{E}\!\left[#1\right]}
\newcommand{\estart}[2]{\mathbb{E}_{#2}\!\left[#1\right]}
\newcommand{\prstart}[2]{\mathbb{P}_{#2}\!\left(#1\right)}
\newcommand{\prcond}[3]{\mathbb{P}_{#3}\!\left(#1\;\middle\vert\;#2\right)}
\newcommand{\econd}[2]{\mathbb{E}\!\left[#1\;\middle\vert\;#2\right]}
\newcommand{\pru}[1]{\mathbb{P}_U\!\left(#1\right)}
\newcommand{\econdu}[2]{\mathbb{E}_U\!\left[#1\;\middle\vert\;#2\right]}
\newcommand{\tn}{|\kern-.1em|\kern-0.1em|}
\newcommand\be{\begin{equation}}
\newcommand\ee{\end{equation}}
\def\bP{\mathbb{P}}
\begin{document}
\title{\bf Martingale defocusing and transience of a self-interacting random walk}

\author{
Yuval Peres\thanks{Microsoft Research, Redmond, Washington, USA; peres@microsoft.com} \and Bruno Schapira\thanks{Aix Marseille Universit\'e, CNRS, Centrale Marseille, I2M, UMR 7373, 13453 Marseille, France; bruno.schapira@univ-amu.fr} \and Perla Sousi\thanks{University of Cambridge, Cambridge, UK;   p.sousi@statslab.cam.ac.uk}
}
\maketitle
\begin{abstract}
Suppose that $(X,Y,Z)$ is a random walk in $\Z^3$ that moves in the following way: on the first visit to a vertex only $Z$ changes by $\pm 1$ equally likely, while on later visits to the same vertex $(X,Y)$ performs a two-dimensional random walk step. We show that this walk is transient thus answering a question of Benjamini, Kozma and Schapira. One important ingredient of the proof is a dispersion result for martingales. 
\newline
\newline
\emph{Keywords and phrases.} Transience, martingale, self-interacting random walk, excited random walk.
\newline 
MSC 2010 \emph{subject classifications.} Primary 60K35.
\end{abstract}

\section{Introduction}

In this paper we study the following self-interacting random walk $(X,Y,Z)$ in $\Z^3$. On the first visit to a vertex only $Z$ changes by $\pm1$ equally likely, while on later visits to the same vertex $(X,Y)$ performs a two dimensional random walk step, i.e.\ it changes by $(\pm1,0)$ or $(0,\pm1)$ all with equal probability. This walk was conjectured in~\cite{BenKozScha} to be transient. 

This model fits into the wider class of excited random walks which were first introduced by Benjamini and Wilson~\cite{BenWilson}. They study walks that on the first visit to a vertex in~$\Z^d$ have a bias in one direction while on later visits they make a simple random walk step. There has been a lot of active research in this type of model; see the recent survey~\cite{KosZer} and the references therein. 

Another process of this flavour was analysed in~\cite{Serguei}; suppose that $\mu_1, \mu_2$ are two zero-mean measures in $\R^3$ and consider any adapted rule for choosing between $\mu_1$ and $\mu_2$. By adapted rule, we mean that the next choice every time depends on the history of the process up to this time. 
In~\cite{Serguei} it was proved that if each measure is supported on the whole space, then for any adapted rule, the resulting walk in~$\R^3$ is transient. In~\cite{RaimSchap2} transience and recurrence properties and weak laws of large numbers were also proved for specific choices of one-dimensional measures; for instance when $\mu_1$ is the distribution of simple random walk step and $\mu_2$ the symmetric discrete Cauchy law. 

A larger class of such processes are the so-called self-interacting random walks, which are not Markovian, since the next step depends on the whole history of the process up to the present time. For instance the edge or vertex reinforced random walks have attracted a lot of attention, see e.g.~\cite{Omer, BrunoSch, EscTW, MerkRolSil, Pemsurvey, SabTar, Tarres, Toth}.

\begin{theorem}
\label{theotransience}
Let $W_t=(X_t,Y_t,Z_t)$ be a random walk in $\Z^3$ such that on the first visit to a vertex only $Z_t$ changes to $Z_t\pm 1$ equally likely, while on later visits to a vertex $(X_t,Y_t)$ makes a two dimensional simple random walk step. Then $W$ is transient, i.e.\ $\|W_t\|\to\infty$ as $t\to \infty$ almost surely.
\end{theorem}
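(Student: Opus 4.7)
Write $F_t$ for the number of steps by time $t$ that are $Z$-moves (which happen exactly at first visits to vertices) and $R_t=t-F_t$ for the number of $(X,Y)$-moves. Since conditional on $\F_{t-1}$ the next increment is either a symmetric $\pm 1$ move in $Z$ or a 2D simple random walk step in $(X,Y)$, each of $X_t, Y_t, Z_t$ is a martingale with bounded increments, and a direct one-step computation shows that $\|W_t\|^2 - t$ is itself a martingale. In particular $\langle Z\rangle_t = F_t$, $\langle X\rangle_t = \langle Y\rangle_t = R_t/2$, and $\E{\|W_t\|^2}=t$, so a priori the walk lives at scale $\sqrt t$ in $L^2$.

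The first step of the argument is to prove $F_t\to\infty$ and $R_t\to\infty$ almost surely. If $R_t$ stayed bounded, all sufficiently late steps would be $Z$-moves and the walker would sit in a single column performing a 1D simple random walk in $z$, which returns almost surely; at that return the current vertex is old, forcing a later $(X,Y)$-move, a contradiction. If $F_t$ stayed bounded, then all sufficiently late steps would be $(X,Y)$-moves and the $(X,Y)$-projection would execute an unbounded 2D simple random walk; upon entering any previously unvisited column the underlying 3D vertex is fresh, forcing a later $Z$-move, a contradiction. A quantitative version of the second argument observes that each new column entered by the $(X,Y)$-projection contributes at least one $Z$-step, so $F_t$ is bounded below by the range of a 2D simple random walk run for $R_t$ steps, which is classically of order $R_t/\log R_t$. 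Thus both $F_t$ and $R_t$ diverge at definite rates.

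The second step is to invoke the martingale defocusing statement advertised in the abstract. I expect a conclusion of the form
\[
\pr{|M_t|\leq r} \;\leq\; C\,\frac{r+1}{\sqrt{\langle M\rangle_t}},
\]
in a version robust to adaptive time changes. Applying this to $Z_t$ with $\langle Z\rangle_t=F_t$, and a two-dimensional analogue to $(X_t,Y_t)$ with quadratic variation of order $R_t$, and using that at least one of $F_t,R_t$ is $\geq t/2$, yields a polynomial upper bound $\pr{\|W_t\|\leq r}\leq \phi(r,t)$. Passing to a geometric subsequence $T_k=2^k$ and applying Borel-Cantelli then gives $\|W_{T_k}\|\to\infty$ along the subsequence, and Doob's $L^2$ maximal inequality applied to the martingale $\|W_t\|^2-t$ controls the oscillation of $\|W_t\|$ on each block $[T_k,T_{k+1}]$ by $O(\sqrt{T_k})$, upgrading the conclusion to $\|W_t\|\to\infty$ almost surely.

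The main obstacle is the defocusing step. Because the step type at time $t$ is an adaptive functional of the entire past, one cannot view $Z_t$ as a 1D simple random walk run for an independent random time $F_t$: an adversary could in principle coordinate the allocation of quadratic variation between the $Z$-coordinate and the $(X,Y)$-coordinate to try to keep $W_t$ near the origin. Any useful defocusing statement must therefore hold uniformly across such adaptive time changes, and proving a sharp enough dispersion estimate in this setting is the genuinely novel technical ingredient of the paper.
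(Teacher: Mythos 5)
Your high-level plan is in the spirit of the paper, and your preliminary observations (that $F_t$ and $R_t$ diverge, that $F_t\gtrsim t/\log t$ because each new column entered by the $(X,Y)$-projection forces at least one $Z$-step, and that $\|W_t\|^2-t$ is a martingale) are correct. You also correctly identify the defocusing step as the real obstacle. But there are two genuine gaps.

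First, the dispersion estimate you hope for, $\pr{|M_t|\le r}\lesssim (r+1)/\sqrt{\langle M\rangle_t}$, is false in the relevant generality, and the paper in fact proves a counterexample as Lemma~\ref{lem:example}: for every $n$ there is a martingale with $\econd{(M_{k+1}-M_k)^2}{\F_k}\ge 1$ (so $\langle M\rangle_n\ge n$) and $|M_{k+1}-M_k|\le\log n$, yet $\pr{M_n=0}\ge c>0$. This is exactly the regime of the problem: the increments of the martingale $M_t=Z_{\tau_t}$ are dominated by i.i.d.\ geometrics (Remark~\ref{rem:stochdom}), so their running maximum over $n$ steps is of order $\log n$, and the adversary in Lemma~\ref{lem:example} can use increments of exactly this size to keep the martingale focused at $0$. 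The correct dispersion estimate that is proved (Corollary~\ref{cor:easycase} and Proposition~\ref{lem:martingale}) is only $\pr{M_n=0}\lesssim\exp(-c(\log n)^{1-a})$ for increments bounded by $(\log n)^a$ with $a<1$ --- a stretched-exponential rather than polynomial bound, and the entire paper is built around extracting enough quadratic variation in the geometrically shrinking windows $[t_k,t_{k+1}]$ near $n$ to make this barely sufficient after multiplying by the local CLT factor $1/n$. Second, the proposed upgrade from the subsequence $T_k=2^k$ to all times via Doob's inequality for $\|W_t\|^2-t$ does not work: the oscillation over $[T_k,T_{k+1}]$ that you would control is $O(\sqrt{T_k})$, which is the same order as $\|W_{T_k}\|$ itself, so it does not rule out a return to the origin inside a block.

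The paper's route is structurally different and avoids both problems. Rather than applying defocusing to both coordinates and playing them off against each other, it conditions on the whole two-dimensional process $U=(X_{\tau_\cdot},Y_{\tau_\cdot})$ observed at the times when $(X,Y)$ moves. Conditionally on $U$, the process $M_t=Z_{\tau_t}$ is a martingale (Claim~\ref{cl:mgs}) whose quadratic variation gains at least $2$ at each fresh site of $U$ (Claim~\ref{cl:freshsite}); crucially, which times are fresh is a deterministic function of $U$, so the hypothesis~\eqref{eq:condquad} of Proposition~\ref{lem:martingale} can be made to hold almost surely under $\bP_U$. Proposition~\ref{prop:U} shows that even conditionally on $U_n=0$, enough of the windows $[t_k,t_{k+1}]$ contain many fresh sites, yielding $\pr{U_n=M_n=0}\lesssim n^{-1}\exp(-c(\log n)^{1/4})$. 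Summing and applying Borel--Cantelli then shows each point is visited finitely often, which is the correct formulation of transience here (and sidesteps the oscillation issue entirely).
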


\begin{figure}[h!]
\begin{center}
\includegraphics[scale=0.5]{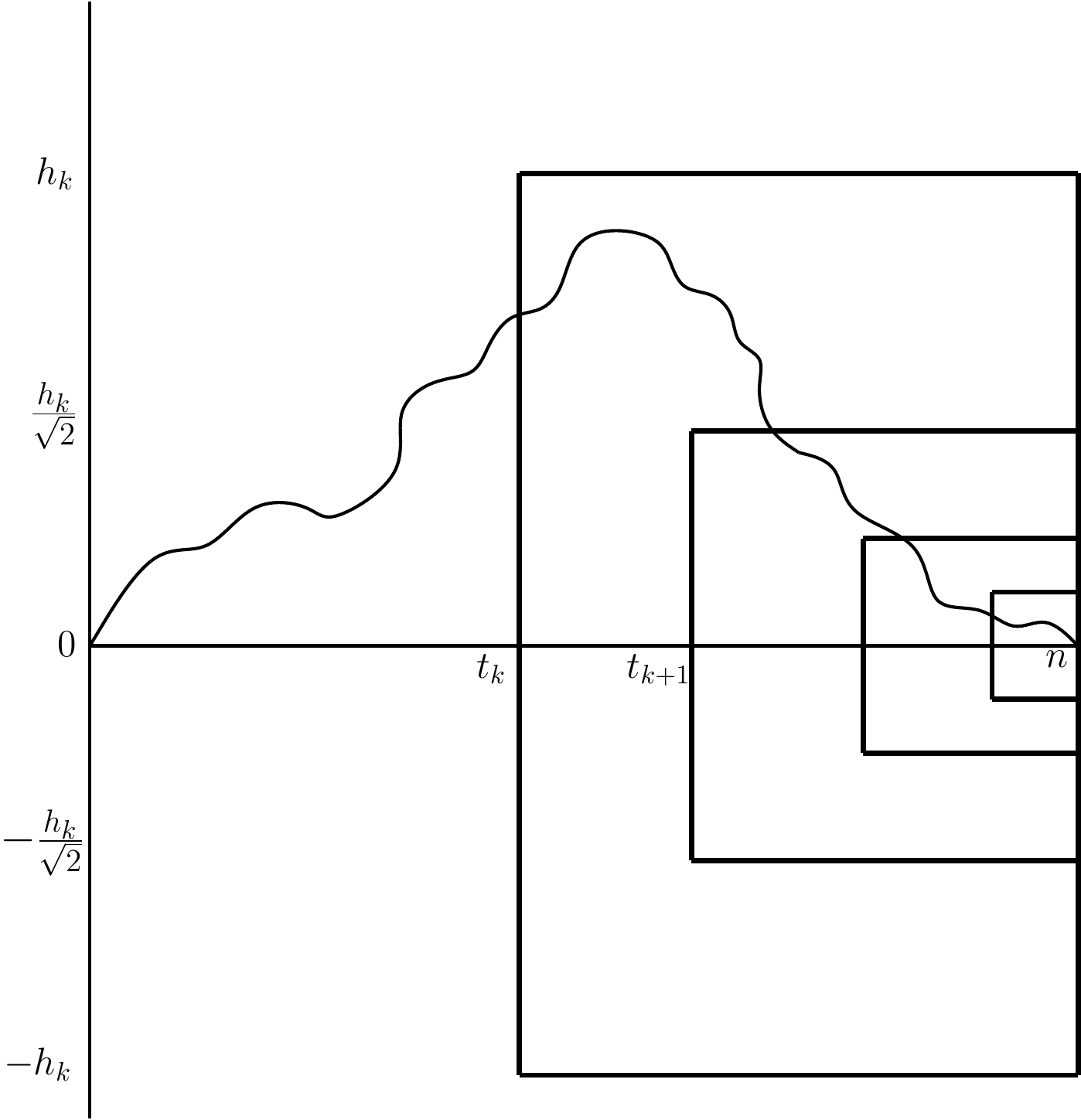}
\caption{The rectangles $[t_k,n]\times[-h_k,h_k]$ and the graph of $M$\label{fig:boxes}}
\end{center}
\end{figure}

We now give a quick overview of the proof of Theorem \ref{theotransience}. By conditioning on all the jumps of the two dimensional process $(Y,Z)$ and looking at the process $X$ only at the times when $(Y,Z)$ moves, we obtain a martingale $M$. Then we need to obtain estimates for the probability that $M$ is at $0$ at time~$n$ so that when multiplied by $1/n$ it should be summable. In Section~\ref{sec:martdef} we state and prove a proposition that gives estimates for a martingale to be at $0$ at time $n$ when it satisfies certain assumptions. We now state a simpler form of this proposition.
\begin{corollary}\label{cor:easycase}
Let $M$ be a martingale satisfying almost surely
$$\econd{(M_{k+1}-M_k)^2}{\F_k}\ge 1 \quad \textrm{and} \quad |M_{k+1}-M_k|\le  (\log n)^a,$$
for all $k\le n$ and some $a<1$. Then there exists a positive constant $c$, such that
\[
\pr{M_n=0} \leq \exp\left(-c(\log n)^{1-a} \right).
\]
\end{corollary}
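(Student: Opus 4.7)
Plan. My approach, inspired by the nested rectangles in Figure~\ref{fig:boxes}, is a multi-scale argument: I show that $\{M_n=0\}$ forces the martingale to satisfy roughly $(\log n)^{1-a}$ nested small-ball constraints, each costing a factor bounded below $1$ by a positive constant. Taking a product then produces the exponential bound.

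Setup. Write $L=(\log n)^a$ and choose backward times $n=t_0>t_1>\cdots>t_K$ together with widths $h_0<h_1<\cdots<h_K$, where $K=\lceil c_1(\log n)^{1-a}\rceil$ for a small $c_1>0$. The $h_k$ are calibrated so that Azuma's inequality applied to $M_\cdot-M_{t_k}$ (whose increments are bounded by $L$) gives $\pr{|M_n-M_{t_k}|>h_k}\leq \exp(-c_2(\log n)^{1-a})$. A union bound then shows that, outside an event of total probability at most $K\exp(-c_2(\log n)^{1-a})$ (already of the target order), $\{M_n=0\}$ forces the nested constraints $|M_{t_k}|\leq h_k$ for every $k$.

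Key estimate. For each $k$, conditional on $\F_{t_{k+1}}$, the block increment $\Delta_k:=M_{t_k}-M_{t_{k+1}}$ is a martingale sum of $t_k-t_{k+1}$ differences with conditional variances $\geq 1$ and bounded by $L$. Thus $\E{\Delta_k^2\mid\F_{t_{k+1}}}\geq t_k-t_{k+1}$, while the Burkholder--Davis--Gundy inequality yields $\E{\Delta_k^4\mid\F_{t_{k+1}}}\leq C(t_k-t_{k+1})^2 L^4$. Paley--Zygmund then gives a positive conditional lower bound on the probability that $|\Delta_k|$ exceeds a constant multiple of $\sqrt{t_k-t_{k+1}}$. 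Choosing the schedule so that $\sqrt{t_k-t_{k+1}}$ comfortably exceeds $h_k+h_{k+1}$, such a large block increment is incompatible with $|M_{t_k}|\leq h_k$ given $|M_{t_{k+1}}|\leq h_{k+1}$, giving $\pr{|M_{t_k}|\leq h_k \mid \F_{t_{k+1}}} \leq 1 - c_0$ on the previous-scale event. Iterating by the tower property,
\[
\pr{\bigcap_{k=1}^{K}\{|M_{t_k}|\leq h_k\}}\leq (1-c_0)^K\leq \exp\!\bigl(-c_3(\log n)^{1-a}\bigr),
\]
which combined with the Azuma exceptional probabilities yields the claim.

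Main obstacle. The crux is making $c_0$ an \emph{absolute} constant rather than one that decays with $L$. The crude Paley--Zygmund estimate only produces $c_0\sim L^{-4}$, which after $K$ iterations is not enough when $a$ is comparable to $1$. Sharpening this seems to require either enlarging the blocks enough to invoke a quantitative martingale CLT (Berry--Esseen error $\ll 1$, which needs $t_k-t_{k+1}\gg L^6$ and hence constrains the growth schedule of the $t_k$), a truncation separating ``large'' from ``small'' jumps, or---most plausibly---appealing to the more general dispersion proposition of Section~\ref{sec:martdef}, which is designed to navigate precisely this trade-off and of which the corollary is the clean specialization.
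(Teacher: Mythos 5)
Your plan correctly mimics the nested-rectangle picture, and you rightly anticipate that the corollary should follow from the more general Proposition~\ref{lem:martingale} (the paper indeed deduces the corollary by dividing $M$ by $(\log n)^a$ and applying that proposition with $N=\log_2(n)/2$). However, the self-contained argument you sketch has a gap that goes beyond the Paley--Zygmund constant you flag, and I think it is worth naming precisely.

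The ``nested small-ball'' framing $\bigcap_k\{|M_{t_k}|\le h_k\}$ forces you to make the events $\{|M_n-M_{t_k}|>h_k\}$ collectively negligible, and Azuma requires $h_k\gtrsim \sqrt{(n-t_k)}\,L\,(\log n)^{(1-a)/2}$ for that (with $L=(\log n)^a$). But you also need $\sqrt{t_k-t_{k+1}}\gg h_k+h_{k+1}$ to run the block-increment incompatibility step, and since $t_k-t_{k+1}\le n-t_{k+1}$ this would force $\sqrt{n-t_{k+1}}\gg h_{k+1}\gtrsim\sqrt{(n-t_{k+1})}\,L\,(\log n)^{(1-a)/2}$ --- impossible. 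So no schedule $(t_k,h_k)$ can satisfy both requirements; the Azuma-calibrated $h_k$ are always an order of magnitude too large for the incompatibility step, even before one worries about making $c_0$ an absolute constant. The paper's way around this is to abandon the small-ball events entirely and instead track the hitting times $s_k$ of the space-time rectangles $H_k=[t_k,n]\times[-h_k,h_k]$ with $h_k\asymp\sqrt{n-t_k}/L$: on $\{M_n=0\}$ the point $(n,0)$ lies in every $H_k$, so $s_k\le n$ automatically, with no Azuma and no exceptional event. This lets $h_k$ be taken at the much smaller scale $\sqrt{n-t_k}/L$ where the escape step becomes tractable. The second point is the trade-off between the number of scales and the per-scale escape probability: rather than demanding an absolute-constant escape probability on $(\log n)^{1-a}$ scales, the paper proves (via Lemma~\ref{lem:prelim}, a gambler's-ruin estimate by optional stopping on $E_k$, and Doob's maximal inequality) that $\prcond{s_k>n}{\F_{s_{k-1}}}{}\gtrsim 1/(\log n)^a$ on each of $\asymp\log n$ geometric scales $t_k=n-n/2^k$, and then $(1-c/(\log n)^a)^{\log_2 n/2}\le\exp(-c(\log n)^{1-a})$. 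Your instinct that the proposition ``is designed to navigate precisely this trade-off'' is right; the concrete resolution is to accept a per-scale probability that degrades like $L^{-1}$ and compensate with $L^{1/a}$ times as many scales.
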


We remark that related results were recently proved by Alexander in~\cite{Kenneth} and by Armstrong and Zeitouni in~\cite{ArmZei}.

In order to prove Corollary~\ref{cor:easycase} we use the same approach as in~\cite{GurPerZei}. More specifically, we consider the rectangles as in Figure~\ref{fig:boxes}, where the widths decay exponentially and $t_k=n-n/2^k$ for $k< \log_2(n)$. It is clear that $\{M_n=0\}$ only if the graph of $M$ hits all the rectangles. Note that it suffices to show that for most rectangles conditionally on hitting them, the probability that the graph of $M$ does not hit the next one is lower bounded by $c/(\log n)^a$. This is the content of Proposition~\ref{lem:martingale}
in Section~\ref{sec:martdef}. In order to control the probabilities mentioned above, we also have to make sure that the two dimensional process visits enough new vertices in most intervals $[t_k,t_{k+1}]$. This is the content of Proposition~\ref{prop:U} that we state and prove in Section~\ref{sec:proof}.

In Section~\ref{sec:example} we prove the following lemma, which shows that there is no dispersion result in the case $a=1$, with general hypotheses like in Corollary~\ref{cor:easycase}.

\begin{lemma}\label{lem:example}
There exists a positive constant $c$, such that for any $n$, there exists a martingale~$(M_k)_{k\leq n}$ sastisfying almost surely
\[
\econd{(M_{k+1}- M_k)^2}{\F_k} \geq 1 \quad \textrm{and}\quad |M_{k+1}- M_k| \leq \log n,
\]
for all $k\le n$, yet 
\[
\pr{M_n=0} \geq c.
\]
\end{lemma}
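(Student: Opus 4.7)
I would construct an explicit martingale $(M_k)_{0\le k\le n}$ satisfying the hypotheses with $\pr{M_n=0}\ge c$ for some universal $c>0$, matching the degeneracy of the bound in Corollary~\ref{cor:easycase} at $a=1$. Write $L=\log n$. My starting point is a ``doubling walk'': set $M_0=0$ and inductively
\[
M_k = M_{k-1} + s_k\,\varepsilon_k,\qquad s_k := \min\!\bigl(L,\,\max(1,|M_{k-1}|)\bigr),
\]
where $(\varepsilon_k)$ is an i.i.d.\ Rademacher sequence. The bounds $|M_k-M_{k-1}|=s_k\le L$ and $\econd{(M_k-M_{k-1})^2}{\F_{k-1}}=s_k^2\ge 1$ hold automatically. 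As long as $|M_{k-1}|\le L$, the walk stays on the ladder $\{0,\pm 1,\pm 2,\ldots,\pm 2^{\lfloor \log_2 L\rfloor}\}$ and from $\pm 2^j$ moves to either $0$ or $\pm 2^{j+1}$ with equal probability; the elementary recursion $p_k=(1-p_{k-1})/2$ then shows that the probability of being at $0$ converges geometrically to $1/3$.

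The issue is that the bare doubling walk escapes $[-L,L]$ with probability $\sim 1/L$ per excursion from $0$, so for $n\gg L$ the escape is almost certain and $\pr{M_n=0}\to 0$. My plan is to modify the dynamics at $|M_{k-1}|=L$ (and, if needed, at smaller ladder levels) by replacing the two-outcome doubling step with a multi-outcome step: e.g.\ from $\pm L$ one jumps to $0$, $\pm L/2$, or $\pm 2L$ with probabilities carefully tuned so that (i) the martingale property is preserved, (ii) the conditional variance remains $\ge 1$, (iii) the step bound $L$ is respected, and (iv) the per-visit escape probability is reduced to $O(1/L^2)$. Applying analogous modifications at each ladder scale, the modified chain has a proper stationary distribution on a slightly enlarged state space, and a mixing-time estimate would then show that $\pr{M_n=0}$ is close to the stationary mass at the origin. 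Equivalently, one may view the construction as concatenating $\Theta(\log n)$ quasi-independent ladder excursions, in each of which the walk returns to (a rescaled) zero with probability $1-O(1/L)$; multiplying these gives $\pr{M_n=0}\gtrsim (1-c/\log n)^{\log n} \sim e^{-c}$, which is the degenerate bound in Corollary~\ref{cor:easycase} at $a=1$.

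The main obstacle is verifying that the stationary mass at $0$ remains bounded below by a universal constant after the modifications. The requirements (i)--(iv) interact delicately: a stronger tilt toward the origin at $|v|=L$ suppresses escape but can also dilute the $1/3$-atom of the original doubling ladder, whereas a milder tilt leaves the escape probability too large to be absorbed across $n$ steps. The key technical work is a balanced multi-scale analysis of the modified chain, tracking the stationary mass at each ladder level and showing that the chain mixes in $o(n)$ steps, so that the atom at $0$ survives to time $n$.
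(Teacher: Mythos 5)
Your ``doubling walk'' starting point is a nice observation (the uncapped version does satisfy $\pr{M_k=0}\to 1/3$ by the recursion $p_{k+1}=(1-p_k)/2$), but the proposed repair---tilting the dynamics near $|v|=L$ so that the chain has ``a proper stationary distribution on a slightly enlarged state space'' and then invoking mixing---cannot work, and the obstruction is not a matter of tuning constants. A time-homogeneous Markov chain on a finite state space that is also a martingale cannot be ergodic: a bounded martingale converges almost surely, whereas an ergodic finite chain visits every state infinitely often. Concretely, whatever enlarged state space $\{0,\pm 1,\ldots,\pm K\}$ you choose, the martingale property at the extreme point $K$ forces the step from $K$ to have mean $K$ while remaining in $[-K,K]$; the only possibility is that $K$ is absorbing, so there is no stationary distribution with full support and no mixing. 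If instead you do not confine the chain and merely make escape from $L$ rare, the arithmetic you sketch forces the walk to ``stick'' near $L$ for order $L^2$ steps per visit (a martingale cannot drift back toward the origin), which again destroys the atom at zero over $n\gg L^2$ steps. So requirements (i)--(iv) are not just delicate; they are jointly unachievable for a time-homogeneous chain.

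Your closing heuristic---concatenating $\Theta(\log n)$ quasi-independent stages each of which returns the walk to a rescaled origin with probability $1-O(1/\log n)$---is the right idea, and it is essentially what the paper does, but the paper realizes it with a \emph{time-inhomogeneous} construction whose multi-scale structure lives on the time axis, not on a spatial ladder. The paper fixes dyadic times $t_\ell=n-n/2^\ell$; on the first half of each block the walk runs as a unit-step simple random walk (giving variance $1$ and small steps cheaply), on the second half it makes jumps of size $\pm[\log n]$ and stops as soon as it is within $\log n$ of the origin; the gambler's-ruin estimate $\prstart{S_k>0\ \forall k\le n}{h}\lesssim h/\sqrt{n}$ shows this fails with probability $O(1/\log n)$ per block, and finally the last $O((\log n)^2)$ steps are handled by a lazy $\pm[\log n]$ move with probability $1/(\log n)^2$. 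The time-dependence is exactly what lets the construction sidestep the absorbing-state obstruction above: the walk is never required to be a fixed Markov chain in equilibrium. If you want to keep your spatial-ladder picture, you would need to inject this kind of time-dependence (e.g.\ by changing the ladder's boundary behaviour as a function of how much time remains), at which point you are essentially reconstructing the paper's argument.
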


{\bf{Notation:}} For functions $f,g$ we write $f(n)\lesssim g(n)$ if there exists a universal constant $C>0$ such that $f(n)\leq Cg(n)$ for all $n$. We write $f(n)\gtrsim g(n)$ if $g(n)\lesssim f(n)$. Finally we write $f(n)\asymp g(n)$ if both $f(n)\lesssim g(n)$ and $f(n)\gtrsim g(n)$. 
We write $\B(x,r)$ to denote the ball in the~$\LL^1$-
 metric centered at $x$ of radius $r$. Note also that in the rest of the paper we use $c$ for a constant whose value may change from line to line.

\section{Martingale defocusing}\label{sec:martdef}

In this section we state and prove a dispersion result for martingales. Then in Section~\ref{sec:proof} we use it to prove our main result, Theorem~\ref{theotransience} when $a=1/2$.  

We call the quadratic variation of a martingale $M$, the process $(V_t)_{t\ge 1}$ defined by 
\[
V_t = \sum_{\ell=1}^{t}\econd{(M_\ell - M_{\ell-1})^2}{\F_{\ell-1}}.
\]

\begin{proposition}\label{lem:martingale}
Let $\rho>0$ be given. 
There exists a positive constant $c$ and $n_0\geq 1$
such that the following holds for any $a\in (0,1)$. 
Suppose that $M$ is a martingale with quadratic variation $V$
and suppose that $(G_k)_k$ is an i.i.d.\ sequence of geometric random variables with mean $2$ satisfying
\begin{align}\label{eq:increment}
|M_{k+1} - M_k| \leq G_k \quad \forall k.
\end{align}
For each $1\leq k< \log_2 (n)$ we let $t_k=n-n/2^k$ and 
\[
A_k = \left\{ V_{t_{k+1}} - V_{t_k} \geq \rho(t_{k+1} - t_k)/(\log n)^{2a}  \right\}.
\]
Suppose that for some $N\ge 1$ and $1\le k_1<\ldots< k_N<  \log_2(n)/2$, it holds
\begin{align}\label{eq:condquad}
\pr{\bigcap_{i=1}^N A_{k_i}}=1.
\end{align}
Then we have for all $n\geq n_0$ 
\[
\pr{M_n=0} \leq  \exp\left(-cN/(\log n)^a \right).
\]
\end{proposition}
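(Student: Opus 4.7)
The plan is to follow the strategy sketched after Corollary \ref{cor:easycase}, in the spirit of \cite{GurPerZei}. Choose box heights $h_k$ of order $\sqrt{n/2^{k+1}}/(\log n)^a$, so that $h_k$ matches the typical fluctuation scale of $M_{t_{k+1}}-M_{t_k}$ under the lower variance bound enforced by $A_k$, and consider the rectangles $B_k = [t_k,n]\times[-h_k,h_k]$ from Figure \ref{fig:boxes}. Since $\{M_n=0\}$ forces the graph of $M$ to intersect every $B_k$, and in particular every $B_{k_i+1}$ for $i=1,\ldots,N$, it is enough to bound the probability that the graph simultaneously hits all these rectangles.

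The heart of the argument is a one-step estimate: conditionally on the graph having hit $B_{k_i}$, the graph should fail to hit $B_{k_i+1}$ with probability at least $c/(\log n)^a$. Applying the strong Markov property at the hitting time of $B_{k_i}$, this reduces to a two-part statement about the martingale $(M_s)_{s\geq t_{k_i}}$ starting within $[-h_{k_i},h_{k_i}]$. First, by time $t_{k_i+1}$ the martingale is, with probability at least $c/(\log n)^a$, deflected to $|M_{t_{k_i+1}}| > 2h_{k_i+1}$: this is an anti-concentration statement using the lower bound on $V_{t_{k_i+1}}-V_{t_{k_i}}$ from $A_{k_i}$ together with the a.s.\ bound on increments by the i.i.d.\ geometric variables $G_\ell$. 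Second, conditionally on such a deflection, the martingale stays outside $[-h_{k_i+1},h_{k_i+1}]$ for all $s\in[t_{k_i+1},n]$ with high probability: this is a Freedman/Azuma-type maximal bound using the same geometric increment bound. The restriction $k_i<\log_2(n)/2$ gives $t_{k_i+1}-t_{k_i}\ge \sqrt n/2$, so that after truncating the dominating geometric variables $G_\ell$ at $C\log n$ (which costs only polynomially small error, since $\pr{G>C\log n}\le n^{-p}$ for $C$ large enough) there are enough increments in the interval for the anti-concentration to be effective.

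Iterating the one-step estimate over $i=1,\ldots,N$ via the strong Markov property and the tower rule then gives
\[
\pr{M_n=0} \;\le\; \prod_{i=1}^{N}\!\left(1-\frac{c}{(\log n)^a}\right) \;\le\; \exp\!\left(-\frac{cN}{(\log n)^a}\right),
\]
as claimed. The principal obstacle is the anti-concentration step, since the classical Kolmogorov--Rogozin inequality is for sums of independent random variables and its martingale analogue must be adapted to our setting, in which we have only a conditional variance lower bound and an a.s.\ bound on increments by i.i.d.\ geometric variables. Moreover, the bound must hold uniformly over possible realizations of $V_{t_{k_i+1}}-V_{t_{k_i}}$, which can range from $\rho T/(\log n)^{2a}$ up to $O(T)$; this will typically be handled via a conditional characteristic function argument or by splitting $[t_{k_i},t_{k_i+1}]$ into blocks and applying Paley--Zygmund inside each block.
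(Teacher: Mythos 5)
Your overall strategy --- the exponentially shrinking rectangles, the one-step estimate that the graph escapes with probability $c/(\log n)^a$, and the product bound over $i=1,\ldots,N$ --- is the right skeleton and is exactly what the paper does. The difficulty is inside the one-step estimate, where your proposed decomposition has two problems.

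First, the decomposition is not internally consistent. You want, with probability $\gtrsim 1/(\log n)^a$, to deflect to $|M_{t_{k_i+1}}|>2h_{k_i+1}\asymp h_{k_i}$ and then use a Doob/Azuma maximal bound to stay outside $[-h_{k_i+1},h_{k_i+1}]$ for all $s\in[t_{k_i+1},n]$. But Doob's inequality for the remaining time gives
\[
\Prob\Bigl(\sup_{s\le n-t_{k_i+1}}|M_{s+t_{k_i+1}}-M_{t_{k_i+1}}|\ge h_{k_i+1}\Bigr)\;\lesssim\;\frac{n-t_{k_i+1}}{h_{k_i+1}^2}\;\asymp\;(\log n)^{2a},
\]
which is far larger than $1$: being at separation $O(h_{k})$ is not nearly enough to stay out over the remaining time of length $\asymp (\log n)^{2a}h_k^2$. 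For the maximal bound to return a constant less than $1$ you must first get to level $\asymp h_k(\log n)^a$, a factor $(\log n)^a$ higher than you propose. But then the single-time anti-concentration statement you are after would require $|M_{t_{k_i+1}}|\gtrsim h_k(\log n)^a$ with probability $\gtrsim 1/(\log n)^a$, and this is hopeless: the conditional variance accumulated over $[t_{k_i},t_{k_i+1}]$ is only of order $h_k^2$, so that level is many standard deviations out.

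Second, and relatedly, the anti-concentration step you describe as ``the principal obstacle'' is indeed the obstacle, and the tools you sketch do not obviously close it. Splitting into blocks and applying Paley--Zygmund needs a variance lower bound per block, but the hypothesis $A_k$ only controls the sum $V_{t_{k+1}}-V_{t_k}$ over the whole interval and gives no control on how that conditional variance is distributed; a conditional characteristic-function argument runs into the same issue, compounded by the lack of independence.

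The paper avoids both problems by a different cut of the one-step estimate that does not go through fixed-time anti-concentration at all. It first shows (Lemma~\ref{lem:prelim}, via optional stopping applied to $M_t^2-V_t$) that with probability $\ge 1/2$ the martingale exits $[-h_k,h_k]$ during $[t_k,t_{k+1}]$; this only uses the lower bound on $V_{t_{k+1}}-V_{t_k}$ plus the geometric bound on increments and requires no per-block information. Then, from the exit level $\pm h_k$, a gambler's-ruin estimate (optional stopping between the levels $h_k(\log n)^a/\rho^2$ and $h_k/\sqrt 2=h_{k+1}$) shows the martingale reaches $\pm h_k(\log n)^a/\rho^2$ before returning to $\pm h_{k+1}$ with probability $\gtrsim 1/(\log n)^a$ --- this is where the $1/(\log n)^a$ factor comes from, as the ratio of the distances to the two barriers. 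Finally Doob's maximal inequality at that higher level gives a constant upper bound $<1$ for ever coming back, because $h_k(\log n)^a\asymp\sqrt{n-t_k}$. So the multiplicative factor $1/(\log n)^a$ comes from a ruin probability, not from anti-concentration, and the ``stay out'' bound is taken from a level $(\log n)^a$ times higher than the next box. You should replace your two-part step with this three-part exit/ruin/maximal decomposition.
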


\begin{remark}\rm{
We note that the choice of mean $2$ for the geometric random variables in~\eqref{eq:increment} is arbitrary. Any other value would be fine as well.    
}
\end{remark}

\begin{proof}[\bf Proof of Corollary~\ref{cor:easycase}]
If we divide the martingale $M$ by $(\log n)^a$, then it satisfies the hypotheses of Proposition~\ref{lem:martingale} with $N=\log_2 (n)/2$, and hence the statement of the corollary follows.
\end{proof}

Before proving Proposition~\ref{lem:martingale} we state and prove a preliminary result that will be used in the proof later.

\begin{lemma}\label{lem:prelim}
There exists $\rho>0$ such that the following holds. Suppose that $M$ is a martingale satisfying assumption~\eqref{eq:increment} of Proposition~\ref{lem:martingale}.
Let $m<\ell$ and $h>\log(\ell-m+1)$ be given and 
let $\tau=\min\{t\geq m\,:\, |M_t-M_m|\geq h\}$. Suppose that $\pr{V_\ell - V_m \geq h^2/\rho}=1$. Then we have almost surely 
\[
\prcond{\tau\leq\ell}{\F_m}{} \geq \frac{1}{2}.
\]
\end{lemma}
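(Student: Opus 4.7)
The plan is to run a second-moment argument on the stopped martingale $\tilde M_t := M_{t\wedge \tau}-M_m$. Conditional on $\F_m$, the orthogonality of martingale differences gives the standard $L^2$--quadratic-variation identity
\[
\econd{\tilde M_\ell^2}{\F_m} \;=\; \econd{V_{\tau\wedge\ell}-V_m}{\F_m},
\]
and the strategy is to pit a lower bound on the right-hand side (coming from the hypothesis on $V$) against an upper bound on the left-hand side (coming from the definition of $\tau$, up to the overshoot at time $\tau$).

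For the lower bound, on $\{\tau>\ell\}$ we have $V_{\tau\wedge\ell}-V_m = V_\ell-V_m\geq h^2/\rho$ a.s., and $V_{\tau\wedge\ell}-V_m\geq 0$ in general, so
\[
\econd{V_{\tau\wedge\ell}-V_m}{\F_m} \;\geq\; \frac{h^2}{\rho}\,\prcond{\tau>\ell}{\F_m}{}.
\]
For the upper bound, on $\{\tau>\ell\}$ the very definition of $\tau$ forces $|\tilde M_\ell|<h$, while on $\{\tau\leq\ell\}$ the predecessor satisfies $|M_{\tau-1}-M_m|<h$ and the increment bound \eqref{eq:increment} gives $|M_\tau-M_{\tau-1}|\leq G_{\tau-1}\leq \max_{m\leq k<\ell}G_k$. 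Setting $G^*:=\max_{m\leq k<\ell}G_k$ and using $(a+b)^2\le 2a^2+2b^2$,
\[
\econd{\tilde M_\ell^2}{\F_m} \;\leq\; h^2\,\prcond{\tau>\ell}{\F_m}{} \;+\; 2h^2 \;+\; 2\,\econd{(G^*)^2}{\F_m}.
\]

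The final ingredient is a maximum-of-geometrics estimate: since $G_k$ has exponentially decaying tails, a union bound yields $\E{(G^*)^2}\leq C_0(\log(\ell-m+1))^2$ for a universal $C_0$, and then the hypothesis $h>\log(\ell-m+1)$ upgrades this to $\E{(G^*)^2}\leq C_0 h^2$. Comparing the two bounds, subtracting $h^2\prcond{\tau>\ell}{\F_m}{}$ and dividing by $h^2$ yields
\[
\bigl(1/\rho - 1\bigr)\,\prcond{\tau>\ell}{\F_m}{} \;\leq\; 2 + 2C_0,
\]
so it suffices to pick $\rho>0$ small enough (universally) that the right-hand side divided by $1/\rho-1$ is at most $1/2$.

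The main obstacle is the overshoot $M_\tau-M_{\tau-1}$: because single-step increments are only bounded by geometric random variables, $|M_\tau-M_m|$ can exceed $h$ by as much as $G^*$, which is typically of order $\log(\ell-m)$. The assumption $h>\log(\ell-m+1)$ is precisely what ensures this overshoot is absorbed into $h$, keeping the upper bound $\E{\tilde M_\ell^2\mid\F_m}=O(h^2)$ of the same order as the lower bound $(h^2/\rho)\prcond{\tau>\ell}{\F_m}{}$. Once that is arranged, the bound on $\prcond{\tau>\ell}{\F_m}{}$ is automatic.
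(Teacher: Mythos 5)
Your argument is correct and follows essentially the same route as the paper: apply the optional stopping theorem to get the identity $\econd{(M_{\tau\wedge\ell}-M_m)^2}{\F_m}=\econd{V_{\tau\wedge\ell}-V_m}{\F_m}$, lower-bound the right side by $(h^2/\rho)\prcond{\tau>\ell}{\F_m}{}$ using the hypothesis on $V$, upper-bound the left side by $O(h^2)$ by splitting off the overshoot $M_\tau-M_{\tau-1}$ and controlling it with the maximum of the geometrics together with the assumption $h>\log(\ell-m+1)$, then take $\rho$ small. The only cosmetic difference is that you package the computation via the centered stopped process $\tilde M$ rather than via the martingale $M_t^2-V_t$ as the paper does; the two are equivalent after noting that $\econd{M_{\tau\wedge\ell}}{\F_m}=M_m$.
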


\begin{proof}[\bf Proof]

It is well known that the process $(M_t^2-V_t)$ is a martingale. Since $\tau\wedge 
\ell\geq m$ is a stopping time, by the optional stopping theorem we get
\begin{align}
\label{eq:first}
\econd{M_{\tau\wedge \ell}^2-V_{\tau\wedge \ell}}{\F_m}=M_m^2-V_m. 
\end{align}
Now we claim that 
\begin{align}
\label{eq:second}
\econd{M_{\tau\wedge \ell}^2- M_m^2}{\F_m} \lesssim h^2.
\end{align}
Indeed we can write  
\begin{align*}
\econd{M_{\tau\wedge \ell}^2 - M_m^2}{\F_m}&=\E{(M_{\tau\wedge\ell} - M_{(\tau-1)\wedge\ell} + M_{(\tau-1)\wedge\ell} - M_m)^2} \\&
\leq 2\E{(M_{\tau\wedge\ell} - M_{(\tau-1)\wedge\ell} )^2} + 2h^2,
\end{align*}
where the last inequality follows from the definition of~$\tau$. In order to bound the first term in the right hand side above, we use~\eqref{eq:increment} and the fact that $\tau\geq m$. This way we get
$$|M_{\tau\wedge\ell} - M_{(\tau-1)\wedge\ell}|\le \max_{m\le t\le \ell} |G_t|.$$
So we now obtain 
\begin{align*}
\econd{(M_{\tau\wedge\ell} - M_{(\tau-1)\wedge\ell} )^2}{\F_m} \lesssim \log(\ell-m+1)^2,
\end{align*}
which proves our claim \eqref{eq:second}, using also the hypothesis $h>\log(\ell-m+1)$.
Since by assumption we have $\pr{V_\ell-V_m \geq h^2/\rho}=1$, we obtain that almost surely
$$\econd{V_{\tau\wedge \ell}-V_m}{\F_m} \ge \econd{(V_\ell-V_m)\1(\tau\ge \ell)}{\F_m}\geq \frac{h^2}{\rho}\prcond{\tau\ge \ell}{\F_m}{}.$$
This together with~\eqref{eq:first} and~\eqref{eq:second} and by taking~$\rho$ sufficiently small proves the lemma. 
\end{proof}

We are now ready to give the proof of Proposition~\ref{lem:martingale}. 

\begin{proof}[\bf Proof of Proposition~\ref{lem:martingale}]

We will argue as in~\cite{GurPerZei}, by saying that
in order for $M_n$ to be at $0$, the graph of $M$, i.e.\ the process~$((t,M_t))_{t\le n}$, has to cross the space-time rectangles $H_k$, 
for all $k=1,\dots,\log_2(n)$, which are defined by  
\begin{align}\label{eq:boxes}
H_k:= [t_k,n]\times [-h_k,h_k]  \quad \textrm{with} \quad h_k:= \rho\, \sqrt{\frac{t_{k+1}-t_k}{(\log n)^{2a}}}.
\end{align}
We now define
\[
\sigma_k= \inf\{t\geq t_k: |M_t|\geq h_k\}
\]
For each $k\leq \log_2(n)$ such that $\pr{A_k}=1$ we can apply Lemma~\ref{lem:prelim} with $m=t_k$, $\ell=t_{k+1}$ and $h=2h_k$ if $n$ is sufficiently large so that $h>\log (\ell-m+1)$. We thus deduce that for~$\rho$ sufficiently small and for all $n>n_0$ we have almost surely
\begin{align}\label{eq:sigma}
\prcond{\sigma_k\le t_{k+1}}{\F_{t_k}}{}\ge \frac 12.
\end{align}

Next we claim that a.s.\ conditionally on $\F_{\sigma_k}$ the martingale 
has probability of order $1/(\log n)^a$, to reach level $\pm h_k(\log n)^a/\rho^2$ before returning below level $\pm h_k/\sqrt{2}=\pm h_{k+1}$ (if at least one of these events occurs before time~$n$). 
Indeed assume for instance that $M_{\sigma_k}\ge h_k$. Then the optional stopping theorem shows that on the event $E_k=\{M_{\sigma_k}\geq h_k\}\cap\{\sigma_k\leq n\}$ we have
\begin{align}
\label{tau12}
h_k\le M_{\sigma_k}= \econd{M_{T_1\wedge T_2\wedge n}}{\F_{\sigma_k\wedge n}},
\end{align}
where 
$$T_1:= \inf \{t\ge \sigma_k\ :\  |M_t|\ge h_k (\log n)^a/\rho^2\},$$
and 
$$T_2:=  \inf \{t\ge \sigma_k\ :\  |M_t|\le h_k/\sqrt{2}\}.$$
We deduce from \eqref{tau12} that on $E_k$
$$h_k\le \econd{M_{T_1}\1(T_1<T_2\wedge n)}{\F_{\sigma_k}} + \frac{h_k(\log n)^a}{\rho^2}\,  \prcond{n<T_1\wedge T_2}{\F_{\sigma_k}}{}+\frac{h_k}{\sqrt{2}}.$$ 
Then by using again the bound~\eqref{eq:increment} we get that on $E_k$ 
\begin{eqnarray*}
\econd{M_{T_1}\1(T_1<T_2\wedge n)}{\F_{\sigma_k}}
&\le & \frac{h_k(\log n)^a}{\rho^2} \,  \prcond{T_1<T_2\wedge n}{\F_{\sigma_k}}{} + \E{\max_{t_k\le t\le n} G_t} \\
&\le & \frac{h_k(\log n)^a}{\rho^2} \, \prcond{T_1<T_2\wedge n}{\F_{\sigma_k}}{}+c_1\log(n-t_k+1),
\end{eqnarray*}
where $c_1$ is a positive constant. 
It follows that if $n$ is large enough, then on $E_k$
$$\prcond{T_1\wedge n <T_2}{\F_{\sigma_k}}{} \gtrsim \frac{1}{(\log n)^a}.$$ 
Similarly we get the same inequality with the event $\{M_{\sigma_k}\geq h_k\}$ replaced by $\{M_{\sigma_k}\leq -h_k\}$, and hence we get that almost surely
\begin{align}\label{eq:t1t2}
\prcond{T_1\wedge n <T_2}{\F_{\sigma_k}}{} \1(\sigma_k\leq n) \gtrsim \frac{1}{(\log n)^a}\1(\sigma_k\leq n),
\end{align}
which proves our claim. We now notice that on the event $\{t_k\leq T_1\leq n\}$ we have by Doob's maximal inequality
\begin{align}\label{eq:doob}
\prcond{\sup_{ i\leq n-t_k}|M_{i+T_1} - M_{T_1}|\geq h_k(\log n)^a/(2\rho^2)}{\F_{T_1}}{} \lesssim \frac{n-t_k}{(h_k(\log n)^a/\rho^2)^2} < c_1,
\end{align}
where $c_1$ is a constant that we can take smaller than $1$ by choosing~$\rho$ small enough. Note that we used again~\eqref{eq:increment} in order to bound the $\LL^2$ norm of the increments of the martingale $M$. 

Next we define a sequence of stopping times, which are the hitting times of the space-time rectangles~$(H_k)$ defined in~\eqref{eq:boxes}. More precisely, we let $s_0=0$ and for $i\geq 1$ we let
\[
s_i = \min\{t>s_{i-1}: (t,M_t) \in H_i\}.
\]
Thus for each $k<\log_2(n)$ such that $\pr{A_k}=1$ by using~\eqref{eq:sigma}, \eqref{eq:t1t2} and~\eqref{eq:doob}, we get that on the event $\{s_{k-1}\leq n\}$
\begin{align*}
\prcond{s_k > n}{\F_{s_{k-1}}}{} \gtrsim \frac{1}{(\log n)^a}.
\end{align*}
Using the assumption that the event $\cap_{i=1}^N A_{k_i}$ happens almost surely we obtain
\begin{align*}
\pr{M_n=0} \leq \pr{s_{k_1},\ldots, s_{k_N}\leq n} \leq \left(1-\frac{c
}{(\log n)^a} \right)^N \leq \exp\left( -cN/(\log n)^a \right),
\end{align*}
 for a positive constant $c$, and this concludes the proof.
\end{proof}

\section{Proof of transience}\label{sec:proof}

In this section we prove Theorem~\ref{theotransience}. We first give an equivalent way of viewing the random walk~$W$. Let $\xi_1,\xi_2,\ldots$ be i.i.d.\ random variables taking values~$(0,0,\pm 1)$ equally likely. Let $\zeta_1,\zeta_2,
\ldots$ be i.i.d.\ random variables taking values $(\pm 1,0,0), (0,\pm 1,0)$ all with equal probability, and independent of the $(\xi_i)_i$. Assume that $(W_0,\dots,W_t)$ have been defined, and set  
\[
r_W(t) = \#\{W_0,\dots,W_t\}.
\]
Then
\[
W_{t+1} = \begin{cases}
W_t + \xi_{r_W(t)} &\mbox{if } r_W(t)=r_W(t-1)+1 \\
W_t+\zeta_{t-r_W(t)} &\mbox{otherwise} 
\end{cases}.
\]
To prove Theorem~\ref{theotransience} it will be easier to look at the process at the times when the two dimensional process moves. So we define a clock process $(\tau_k)_{k\ge 0}$ by $\tau_0=0$ and  for $k\ge 0$,
$$\tau_{k+1}=\inf\, \{t>\tau_k \, :\, (X_t,Y_t)\neq (X_{\tau_k},Y_{\tau_k})\}=\inf\, \{t>\tau_k \, :\, t-r_W(t)=k\}.$$
Note that $r_W(0)=1$ and $\tau_k<\infty$ a.s.\ for all~$k$.  
Observe that by definition the process $U_t:=(X_{\tau_t},Y_{\tau_t})$ is a $2d$-simple random walk, and that $r_W(\tau_t) = \tau_t-t+1$. Note that $$Z_t=\sum_{i=1}^{r_W(t)-1}\langle \xi_i,(0,0,1)\rangle.$$
We set $\F_t = \sigma(\xi_1,\ldots, \xi_{\tau_t-t})$, so that $Z_{\tau_t}$ is $\F_t$-measurable for all $t$. 

We call $\mathbb{Q}$ the law of the process $U$.
We denote by $\pru{}$ the law of the process $W$ conditionally on the whole process~$U$, or in other words on the whole sequence $(\zeta_i)_{i\geq 1}$. 
We write~$\mathbb{P} = \mathbb{Q}\times\mathbb{P}_U$ for the law of the overall process $W$.

In the following claim we show that the process $Z$ observed only at the times when the two-dimensional process moves, is a martingale.

\begin{claim}\label{cl:mgs}
Let $M_t = Z_{\tau_t}$. Then $\mathbb{Q}$-a.s.\ we have that $(M_t)$ is an~$(\F_t)$-martingale under $\mathbb{P}_U$.
\end{claim}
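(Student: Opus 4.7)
The plan is to make the dynamics of $Z$ between two successive moves of the projected walk $U$ explicit under $\mathbb{P}_U$, and then apply the optional stopping theorem. Under $\mathbb{P}_U$ the $\zeta_i$'s are frozen, so the $(X,Y)$-positions at the epochs $\tau_t$ are deterministic and all remaining randomness comes from the i.i.d.\ sequence $(\xi_i)_{i\geq 1}$; write $\xi_i=(0,0,\eta_i)$ with $\eta_i=\pm 1$ fair. Since $\tau_t-t = r_W(\tau_t)-1$ is a stopping time for $(\xi_i)$ given the deterministic $U$-trajectory, the filtration $\F_t=\sigma(\xi_1,\ldots,\xi_{\tau_t-t})$ is precisely the natural filtration of $(\xi_i)$ stopped at that time; in particular $Z_{\tau_t}$ and the set $\AA_t$ of heights $z\in\Z$ for which $(X_{\tau_t},Y_{\tau_t},z)$ has been visited strictly before time $\tau_t$ are both $\F_t$-measurable.

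Next I would describe the excursion on $[\tau_t,\tau_{t+1}]$ in two cases. If $Z_{\tau_t}\in\AA_t$, then $W_{\tau_t}$ is already-visited, the next step moves $(X,Y)$, so $\tau_{t+1}=\tau_t+1$ and $Z_{\tau_{t+1}}=Z_{\tau_t}$. Otherwise $Z_{\tau_t}\notin\AA_t$, and $(X,Y)$ stays fixed while $Z$ performs a simple $\pm 1$ random walk $Z_{\tau_t+j}=Z_{\tau_t}+S_j$ with $S_j:=\eta_{\tau_t-t+1}+\cdots+\eta_{\tau_t-t+j}$, continuing until the first time $K\geq 1$ at which it revisits a previously seen vertex, i.e.\ $S_K\in (\AA_t-Z_{\tau_t})\cup\{S_0,\ldots,S_{K-1}\}$; at that point the next step moves $(X,Y)$, so $\tau_{t+1}=\tau_t+K+1$ and $Z_{\tau_{t+1}}=Z_{\tau_t}+S_K$. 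Conditionally on $\F_t$, the strong Markov property for the i.i.d.\ sequence $(\xi_i)$ at the stopping index $\tau_t-t$ shows that $(\eta_{\tau_t-t+j})_{j\geq 1}$ is i.i.d.\ $\pm 1$ and independent of $\F_t$, so $(S_j)$ is a genuine simple random walk started at $0$ and $K$ is a stopping time in its natural filtration.

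The martingale identity then reduces to $\econdu{S_K}{\F_t}=0$ in the second case. The key observation is that as long as no revisit has occurred the visited set $\{S_0,\ldots,S_{j-1}\}$ is a contiguous interval and $S_{j-1}$ sits at one of its endpoints, so the next step retreats to a visited height with probability at least $1/2$; hence $K$ is stochastically dominated by a geometric random variable with mean $2$, giving $\econdu{K}{\F_t}\leq 2$. Optional stopping applied to the $\pm 1$-increment martingale $(S_j)$ with this integrable stopping time yields $\econdu{S_K}{\F_t}=0$, and combining the two cases gives $\econdu{Z_{\tau_{t+1}}}{\F_t}=Z_{\tau_t}$ almost surely. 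Integrability of $M_t$ follows from the bound $|M_{t+1}-M_t|\leq K+1$. I do not expect a substantive obstacle; the one point deserving care is the clean invocation of the strong Markov property at the random index $\tau_t-t$ to decouple the future $\xi$'s from $\F_t$.
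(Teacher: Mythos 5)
Your proof is correct and follows essentially the same route as the paper: both represent the increment $M_{t+1}-M_t$ as a stopped sum of i.i.d.\ $\pm 1$ steps over a stopping time of the $\xi$-filtration, then apply Wald's identity (equivalently, optional stopping applied to the partial-sum martingale). Your version additionally spells out, via the contiguous-interval observation, why that stopping time is integrable --- a point the paper leaves implicit, justified only a posteriori by the geometric domination noted in Remark~\ref{rem:stochdom}.
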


\begin{proof}[\bf Proof]
We already noticed that $M_t$ is adapted to $\F_t$ for all $t$. Now since the $\xi_i$'s  are i.i.d.\ and have mean $0$ it follows from Wald's identity that
\begin{align*}
\econdu{Z_{\tau_{t+1}}}{\F_t} = \econdu{Z_{\tau_{t}} + \sum_{i=\tau_t - t+1}^{\tau_{t+1}-t-1}\langle\xi_i,(0,0,1)\rangle}{\F_t} = Z_{\tau_t}, 
\end{align*}
since $\tau_{t+1}-t-1$ is the first time after $\tau_t-t+1$ when we visit an already visited site, and is thus a stopping time.
\end{proof}

\begin{remark}\label{rem:stochdom}\rm{
We note that the jumps of the martingale are stochastically dominated by geometric random variables. More precisely, we can couple the process $M$ (or $W$) with a sequence $(G_t)_{t\ge 0}$ of i.i.d.\ geometric random variables with parameter~$1/2$, such that
\begin{align}
\label{Geom}
|M_{t+1}-M_t|\le G_t \quad \textrm{for all}\, \,t\ge 0. 
\end{align}
}
\end{remark}

Before proceeding, we give some more definitions. 
For $t\ge 0$, set 
$$r_U(t)= \#\{U_0,\dots,U_t\},$$
i.e.\ $r_U(t)$ is the cardinality of the range of the two-dimensional process up to time~$t$. We also set for $t\geq 0$
$$V_t:=\sum_{\ell=1}^t \econdu{(M_\ell-M_{\ell-1})^2}{ \F_{\ell-1}}.$$

\begin{claim}\label{cl:freshsite}
Suppose that $U_\ell$ is a fresh site, i.e.\ $U_\ell\notin \{U_0,U_1,\ldots, U_{\ell-1}\}$. Then
\[
\econdu{(M_{\ell+1} - M_{\ell})^2}{\F_\ell}{} \geq 2.
\]
\end{claim}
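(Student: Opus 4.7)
The plan is to observe that, once $U_\ell$ is fresh, the $Z$-coordinate between times $\tau_\ell$ and $\tau_{\ell+1}$ evolves as a simple random walk on $\Z$ started at $Z_{\tau_\ell}$ and run until it first revisits its own range. The conditional second moment of the increment $M_{\ell+1}-M_\ell$ can then be identified with the expected length of this excursion via Wald's second identity, and a trivial lower bound on that length yields the claimed constant.

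My first step would be to note that, because $U_\ell$ is fresh, no site of the form $(U_\ell, z)$ has been visited before $\tau_\ell$; in particular $W_{\tau_\ell}$ is a new vertex in $\Z^3$. The walk rule therefore forces a $Z$-step at time $\tau_\ell$, and as long as the walker remains at a new vertex it continues to take $Z$-steps. Let $\tilde\xi_1, \tilde\xi_2, \ldots$ denote the next unused $Z$-increments (i.i.d.\ uniform on $\{\pm 1\}$ and, under $\mathbb{P}_U$, independent of $\F_\ell$, since they are just the tail of the $\xi$-sequence after the $\F_\ell$-measurable stopping time $\tau_\ell-\ell$), and set $\tilde S_j = \sum_{i=1}^j \tilde\xi_i$. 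Then $W_{\tau_\ell + j} = (U_\ell, Z_{\tau_\ell} + \tilde S_j)$ is new precisely when $\tilde S_j \notin \{\tilde S_0, \ldots, \tilde S_{j-1}\}$. Defining
\[
K := \inf\{j \geq 1 : \tilde S_j \in \{\tilde S_0, \ldots, \tilde S_{j-1}\}\},
\]
the excursion consists of $K$ consecutive $Z$-steps followed by a single 2D step, so $\tau_{\ell+1} = \tau_\ell + K + 1$ and $M_{\ell+1} - M_\ell = \tilde S_K$.

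The second step is to observe $K \geq 2$ almost surely, since $\tilde S_1 = \pm 1 \neq 0 = \tilde S_0$. Applying optional stopping to the martingale $(\tilde S_j^2 - j)_{j \geq 0}$ at the stopping time $K$ (equivalently, Wald's second identity) then gives
\[
\econdu{(M_{\ell+1}-M_\ell)^2}{\F_\ell} = \econdu{\tilde S_K^2}{\F_\ell} = \econdu{K}{\F_\ell} \geq 2.
\]
The only point requiring any verification is the integrability of $K$ needed to apply optional stopping; this is immediate from $\pr{K > j} = 2^{-(j-1)}$, itself a consequence of the fact that $\{K > j\}$ forces $\tilde S$ to reach a new extremum at each of its first $j$ steps, a $1/2$-probability event per increment after the first. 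This integrability check is the only mild obstacle and amounts to a one-line estimate.
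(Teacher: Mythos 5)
Your proof is correct and takes essentially the same approach as the paper: both decompose $M_{\ell+1}-M_\ell$ as a $\pm 1$ random walk stopped at the moment the $Z$-coordinate first revisits a site, then apply Wald's second identity (optional stopping for $\tilde S_j^2 - j$) and lower bound the expected stopping time by $2$. The paper phrases the stopping time as the first direction reversal $\tau=\inf\{k\geq 2:\lambda_{k-1}\neq\lambda_k\}$, which coincides with your first-return time $K$ since the range of a nearest-neighbour walk is an interval whose endpoint is the current position until the first reversal.
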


\begin{proof}[\bf Proof]
Notice that when $U_\ell$ is a fresh site, then $M_{\ell+1} - M_\ell$ can be written as 
\[
M_{\ell+1} - M_\ell = \sum_{i=1}^{\tau} \lambda_i,
\]
where $(\lambda_i)_i$ are i.i.d.\ random variables taking values $\pm 1$ equally likely and $$\tau=\inf\{ k\geq 2: (\lambda_{k-1},\lambda_{k}) \in \{(-1,+1), (+1,-1)\}\}.$$
Then by the optional stopping theorem we deduce
\[
\econdu{(M_{\ell+1} - M_{\ell})^2}{\F_{\ell}}{} = \E{\left(\sum_{i=1}^{\tau}\lambda_i \right)^2} =\E{\tau} \geq 2,
\]
since $\tau\geq 2$ by definition.
\end{proof}

Before proving Theorem~\ref{theotransience} we state a proposition that we prove later, which combined with the above claim guarantees that the quadratic variation~$V$ of the martingale~$M$ satisfies assumption~\eqref{eq:condquad} of Proposition~\ref{lem:martingale}. The following proposition only concerns the 2d-simple random walk.

\begin{proposition}
\label{prop:U}
For $k\geq 1$ we let $t_k=n-n/2^{k}$ and for $\rho>0$ define
$$\K=\left\{1\le k\le (\log n)^{3/4} \, :\, r_U(t_{k+1})-r_U(t_k) \ge \rho(t_{k+1}-t_k)/\log n\right\}.$$
Then there exist positive constants $\alpha$, $c$ and $\rho_*$ such that for all $\rho<\rho_*$
$$\bP\left(\#\K\le \rho (\log n)^{3/4} \ \Big|\  U_n=0\right) \lesssim \exp(-c(\log n)^\alpha).$$
 \end{proposition}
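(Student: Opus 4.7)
The plan is to reduce to an unconditional tail bound by exploiting the 2d local central limit theorem $\bP(U_n=0)\asymp 1/n$, which yields $\bP(E\mid U_n=0)\lesssim n\,\bP(E)$ for any event~$E$. It therefore suffices to prove an unconditional estimate of the form $\bP(\#\K\le\rho(\log n)^{3/4})\le n^{-1-\delta}$ for some $\delta>0$, which already gives the proposition with $\alpha=1$ and $c=\delta$ (any faster decay carries through as well).

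For the unconditional bound, set $L_k:=t_{k+1}-t_k=n/2^{k+1}$, $K:=(\log n)^{3/4}$ and $R_k:=r_U(t_{k+1})-r_U(t_k)$; note $L_k\ge n^{1-o(1)}$ for all $k\le K$, so $\log L_k\sim\log n$. Decompose $R_k=\widetilde R_k-O_k$, where $\widetilde R_k:=|\{U_{t_k+s}:0\le s\le L_k\}|$ is the range of the walk segment on $[t_k,t_{k+1}]$ and $O_k$ is its intersection with the past range $A_{t_k}:=\{U_0,\dots,U_{t_k-1}\}$. By the strong Markov property and translation invariance, $\widetilde R_k$ is independent of $\F_{t_k}:=\sigma(U_0,\dots,U_{t_k})$ and distributed as the range of a fresh 2d SRW of length $L_k$; hence by classical Dvoretzky--Erd\H{o}s/Le Gall asymptotics together with lower-tail large deviations for the 2d range one has $\bP(\widetilde R_k<c_1 L_k/\log n)\le\exp(-c_2(\log n)^\beta)$ for some $\beta>1$, which is far stronger than needed. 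For the overlap, a first-moment calculation gives $\E{O_k\mid\F_{t_k}}=\sum_{x\in A_{t_k}}\bP_{U_{t_k}}(T_x\le L_k)$, and using the 2d hitting-probability estimate $\bP_y(T_x\le L)\lesssim\log(L/|x-y|^2)/\log L$ together with a ``local density'' event $\GG_k:=\{|A_{t_k}\cap\B(U_{t_k},r)|\lesssim r^2/\log n\text{ for all }1\le r\le\sqrt{L_k}\}$, a dyadic annulus computation yields $\E{O_k\,\1_{\GG_k}\mid\F_{t_k}}\lesssim L_k/(\log n)^2$. Markov then gives $O_k\lesssim L_k/(\log n)^{3/2}$ with high conditional probability on $\GG_k$, and hence $R_k\ge\rho L_k/\log n$ outside an event of conditional probability at most some $q_n\to 0$.

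Finally I would aggregate across $k$: because the $\widetilde R_k$ are conditionally independent by the strong Markov property, the indicators $\1(A_k^c)$ with $A_k:=\{R_k\ge\rho L_k/\log n\}$ are sequentially dominated by independent Bernoullis of parameter $q_n$ on the good density events. A Chernoff-type bound for the number of bad indices then yields $\bP(\#\K\le\rho K)\lesssim\exp(-cK\log(1/q_n))+\sum_{k\le K}\bP(\GG_k^c)$, giving the desired polynomial-in-$n$ decay once $q_n$ and the $\GG_k$-failure probabilities are small enough (the latter also being obtainable from first-moment arguments on the 2d range). The main obstacle is sharpening $q_n$: Markov on a first-moment overlap bound only gives $q_n$ polynomial in $\log n$, while the aggregation over $K=(\log n)^{3/4}$ intervals really needs $q_n\le\exp(-(\log n)^{1/4+\varepsilon})$ so that $K\log(1/q_n)$ dominates $\log n$. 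Upgrading to this rate, either via a higher-moment or concentration analysis of $O_k$, or via a more refined description of the 2d range's local structure around the visited point $U_{t_k}$, is where the bulk of the technical work lies.
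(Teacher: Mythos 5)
You have correctly diagnosed the fatal flaw in your own plan: the reduction $\bP(E\mid U_n=0)\lesssim n\,\bP(E)$ is too lossy for this problem. With the per-interval bad-event probability $q_n$ coming from a first-moment overlap bound via Markov, you only get $q_n\asymp 1/(\log n)^\epsilon$, so even a perfect Chernoff aggregation over $K=(\log n)^{3/4}$ intervals yields $\exp\bigl(-cK\log(1/q_n)\bigr)\asymp\exp\bigl(-c(\log n)^{3/4}\log\log n\bigr)$, which is nowhere near $n^{-1-\delta}$. Your suggested fix --- sharpening $q_n$ to $\exp(-(\log n)^{1/4+\varepsilon})$ --- is a dead end for the overlap term: the intersection of a fresh walk segment with the dense past range near $U_{t_k}$ genuinely has a non-negligible probability, polynomial in $\log n$, of being of order $L_k/\log n$, and no higher-moment or concentration analysis will push that below a polylog rate. (Your claim of a range lower-tail bound $\exp(-c(\log n)^\beta)$ with $\beta>1$ is also too optimistic; the paper uses a bound of order $\exp(-c(\log n)^{1/6})$.)

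The paper sidesteps this entirely by never trying to beat $1/n$ unconditionally. Instead it transfers the \emph{per-interval} estimate (not the aggregate) to the Doob transform $Q(\cdot)=\bP(\cdot\mid U_n=0)$ via the local CLT and reversibility: since $A_k\cap\{U_{t_{k+1}}\in\B(0,2\sqrt{n-t_k})\}$ is determined by $U_0,\dots,U_{t_{k+1}}$ with $t_{k+1}\le n/2$, the local CLT gives $Q(\cdot\mid\GG_k)\gtrsim\bP(\cdot\mid\GG_k)$, so one still has $Q(A_k\mid\GG_k)\ge p>0$ on the good event $\widetilde B_k$. One then builds the martingale $\sum_{\ell<k}\bigl\{\1(A_\ell\cap\widetilde B_\ell)-Q(A_\ell\mid\GG_\ell)\1(\widetilde B_\ell)\bigr\}$ under $Q$ (which is a Markov chain) and applies Azuma--Hoeffding directly under $Q$, giving a deviation bound $\exp(-c(\log n)^{3/4})$ that is already the desired conclusion. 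Two further ingredients you do not have: one must also control $\bP(\widetilde B_k^c\mid U_n=0)$, and the good event has two pieces --- the local-density condition on the past range near the origin, handled by a reversed-walk argument using only events measurable at time $n/2$, and the requirement $U_{t_k}\in\B(0,\sqrt{n-t_k})$, handled by a separate martingale/Azuma argument (the paper's Claim about $\sum_k\1(U_{t_k}\in\B(0,\sqrt{n-t_k}))$). Your per-$k$ decomposition into fresh range minus overlap, and the dyadic density event, are essentially the right building blocks; the missing idea is to do the concentration under the conditional law rather than trying to win by brute force before conditioning.
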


\begin{proof}[\bf{Proof of Theorem \ref{theotransience}}]
Let $\K$ and $\rho$ be as in Proposition~\ref{prop:U}. Note that $\K$ is completely determined by the 2d-walk. Setting $A=\{\#\K \ge \rho(\log n)^{3/4} \}$ we then have
\begin{align}\label{eq:hit0}
\pr{U_n=M_n=0} &= \E{\1(U_n=0)\pru{M_n=0}\1(A)} + \E{\1(U_n=0)\pru{M_n=0}\1(A^c)}.
\end{align}
On the event $A$, using Claim~\ref{cl:freshsite} we get that there exist $k_1,\ldots, k_{\rho(\log n)^{3/4}} \in \K$ such that 
\[
\pru{\bigcap_{i=1}^{\rho(\log n)^{3/4}} A_{k_i}} = 1,
\]
where the events $(A_i)$ are as defined in Proposition~\ref{lem:martingale}. We can now apply this proposition (with $a=1/2$) to obtain
\begin{align*}
\pru{M_n=0}\1(\#\K \ge \rho(\log n)^{3/4}) \lesssim \exp(-c(\log n)^{1/4}).
\end{align*}
Therefore from~\eqref{eq:hit0} we deduce
\begin{align*}
\pr{U_n=M_n=0} \lesssim \frac{1}{n} \exp(-c(\log n)^{1/4}) + \frac{1}{n}  \exp(-c(\log n)^{\alpha}), 
\end{align*}
where $\alpha$ is as in Proposition~\ref{prop:U}.
Since this last upper bound is summable in $n$, this proves that~$0$ is visited only finitely many times almost surely. Exactly the same argument would work for any other point of $\Z^3$, proving that~$W$ is transient.
\end{proof}

Before proving Proposition~\ref{prop:U} we state and prove a standard preliminary lemma and a corollary that will be used in the proof. 

\begin{lemma}\label{lem:prelim2}
Let $U$ be a simple random walk in~$\Z^2$ starting from~$0$. Then there exists a positive constant~$c$, such that for all $t\le n\log n$ satisfying $\log (n/t) \lesssim (\log n)^{3/4}$ we have
$$\pr{\#\{U_0,\dots,U_n\}\cap  \B(0,\sqrt t)\ge   \frac t{(\log n)^{1/16}}} \lesssim \exp(-c(\log n)^{1/16} ).$$
\end{lemma}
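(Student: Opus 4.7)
The plan is to decompose the trajectory of $U$ into excursions between the inner ball $\B_1 := \B(0,\sqrt{t})$ and the outer ball $\B_2 := \B(0, 2\sqrt{t})$, and to estimate separately (i) the number of such excursions, and (ii) the range inside $\B_1$ per excursion. Set $\sigma_0 := 0$ and, for $j\ge 0$,
\[
\tau_j := \inf\{k > \sigma_j : U_k \notin \B_2\}, \qquad \sigma_{j+1} := \inf\{k > \tau_j : U_k \in \B_1\},
\]
and let $N := \#\{j : \sigma_j \le n\}$ and $R_j := \#\bigl(\B_1 \cap \{U_{\sigma_j},\ldots,U_{\tau_j \wedge n}\}\bigr)$. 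Since every visit of $U$ to $\B_1$ up to time $n$ falls inside some interval $[\sigma_j, \tau_j \wedge n]$ with $j < N$, one has the pointwise bound
\[
\#\bigl(\{U_0,\ldots,U_n\} \cap \B_1\bigr) \le \sum_{j=0}^{N-1} R_j.
\]

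To bound $N$, I would use standard 2d logarithmic potential theory: for the walk starting from $\partial \B_2$, the probability of exiting $\B(0,\sqrt{n})$ before hitting $\B_1$ is of order $\log 2/\log(\sqrt{n/t}) \asymp 1/\log(n/t)$. Iterating via the strong Markov property (and tracking the shrinking time budget $n - \sigma_j$ across successive excursions), $N$ is stochastically dominated by a geometric random variable with parameter of order $1/\log(n/t)$, yielding $\pr{N \ge K \log(n/t)} \lesssim e^{-cK}$ for $K \ge 1$. Taking $K = (\log n)^{1/16}$ and using the hypothesis $\log(n/t) \lesssim (\log n)^{3/4}$ gives $N \le (\log n)^{13/16}$ with probability at least $1 - e^{-c(\log n)^{1/16}}$.

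To bound $\max_j R_j$, note that the excursion duration $D_j := \tau_j - \sigma_j$ is an exit time of $\B_2$ from an interior point and thus satisfies $\pr{D_j \ge Kt} \lesssim e^{-cK}$. Conditionally on $D_j$, the trajectory on $[\sigma_j, \tau_j]$ is a 2d simple random walk of length $D_j$, whose range is of order $D_j/\log D_j$ with sharp concentration (e.g.\ by Azuma's inequality, since the range has bounded one-step increments). A union bound over the at most $(\log n)^{13/16}$ excursions with deviation scale $(\log n)^{1/16}$, together with $\log t \asymp \log n$ (which holds in the regime $\log(n/t) \lesssim (\log n)^{3/4}$), yields $\max_j R_j \le Ct/(\log n)^{15/16}$ with probability $\ge 1 - e^{-c(\log n)^{1/16}}$. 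Combining,
\[
\#\bigl(\{U_0,\ldots,U_n\} \cap \B_1\bigr) \le N\cdot \max_j R_j \le (\log n)^{13/16}\cdot \frac{Ct}{(\log n)^{15/16}} = \frac{Ct}{(\log n)^{1/8}} \le \frac{t}{(\log n)^{1/16}}
\]
for $n$ large, with total failure probability $\lesssim e^{-c(\log n)^{1/16}}$.

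The main technical hurdle is the sharp exponential tail for $N$: the excursions are only approximately regenerative, so one must combine the potential-theoretic return probability from $\partial \B_2$ to $\B_1$ with careful tracking of the remaining time budget $n - \sigma_j$ as $j$ grows. The range-per-excursion bound is more routine, but requires the high-probability upper bound on $N$ from the first step in order to set up the union bound properly.
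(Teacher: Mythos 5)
Your excursion decomposition, the potential-theoretic control of the number $N$ of excursions, and the overall combining of the two bounds all match the paper's proof in structure; the final arithmetic ($(\log n)^{13/16}\cdot t(\log n)^{-15/16}\le t(\log n)^{-1/16}$) is also fine. However, there is a genuine gap in the step where you control the range of a single excursion.

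You write that the range of a simple random walk ``has bounded one-step increments'' and therefore concentrates by Azuma's inequality. This does not work. The temporal increments $R_{k+1}-R_k\in\{0,1\}$ are indeed bounded, but $R_k$ is not a martingale, so Azuma does not apply to it directly. The natural way to try to use Azuma would be through the bounded-differences (McDiarmid) inequality applied to $R_m$ as a function of the i.i.d.\ steps $\zeta_1,\ldots,\zeta_m$, but the range of a random walk in $\Z^2$ is \emph{not} Lipschitz in those variables: changing a single step $\zeta_i$ translates the whole tail $\{U_i,\ldots,U_m\}$, and this can change $\#\{U_0,\ldots,U_m\}$ by order $m$ (e.g.\ a path that goes out and retraces itself exactly, versus one where a middle step shifts the return leg so it no longer overlaps the outgoing leg). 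Equivalently, the increments of the Doob martingale $\E{R_m\mid\F_k}$ are not uniformly bounded. So the claimed sub-Gaussian concentration of the range does not follow from Azuma/McDiarmid, and the estimate $\max_j R_j\le Ct/(\log n)^{15/16}$ with probability $1-e^{-c(\log n)^{1/16}}$ is not justified by the argument you give. What one actually needs here is a concentration result for the range proved by a different mechanism (sub-additivity: split the trajectory into blocks, bound the range by the sum of block ranges, and apply Chernoff to the i.i.d.\ block ranges), which is exactly what the paper invokes via the cited lemma of Bass and Kumagai. Once you replace the Azuma step with a sub-additivity-based range bound of this type, the rest of your argument goes through.

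One smaller remark: when bounding $N$, you note that the excursions are ``only approximately regenerative'' and that one must track the remaining time budget. The paper sidesteps this cleanly by first restricting to the event that the walk stays inside a large ball $\B(0,\sqrt{n}(\log n)^{2})$ up to time $n$ (an event of probability $1-e^{-c(\log n)^{4}}$), on which each visit to $\partial\B(0,2\sqrt t)$ has a uniform, $\mathcal{G}$-measurably bounded-below probability $\gtrsim 1/(\log(n/t)+\log\log n)$ of never returning to $\B(0,\sqrt t)$ before time $n$; this yields the geometric domination of $N$ directly without any time-budget bookkeeping. You may want to adopt that device to make the $N$-estimate rigorous.
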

\begin{proof}[\bf Proof]
To prove this we first decompose the path into excursions that the random walk makes across 
$\B(0,2\sqrt t)\setminus \B(0,\sqrt t)$ before time $n$. More precisely define $\sigma_0=0$, and for $i\ge 0$, 
$$\sigma'_i= \inf\{k\ge \sigma_i \ :\ U_k\notin \B(0,2\sqrt t)\},$$ 
$$\sigma_{i+1}=\inf \{k\ge \sigma'_i\ : \ U_k\in \B(0,\sqrt t)\}.$$
Let 
$$N:= \max\{i\ :\ \sigma_i \le n\},$$
be the total number of excursions before time $n$, and for each $i\le N$, let 
$$R_i:= \# \{U_{\sigma_i},\dots,U_{\sigma'_i}\},$$
be the number of points visited during the $i$-th excursion. Of course we have 
\begin{align}
\label{bound:excursions}
\#\{U_0,\dots,U_n\}\cap  \B(0,\sqrt t) \le \sum_{i=1}^N R_i.
\end{align}
Note that every time the random walk is on the boundary of the ball $\B(0,2\sqrt{t})$, it has probability of order $1/(\log(n/t)+4\log \log n)$ to hit the boundary of the ball $\B(0,\sqrt{n}\log n)$ before hitting $\B(0,\sqrt{t})$ (see for instance~\cite{LawLim}). If $T$ is the first exit time from $\B(0,\sqrt{n}(\log n)^2)$, then 
\begin{align}\label{eq:unlike}
\pr{T\leq n} \lesssim e^{-c(\log n)^4},
\end{align}
where $c$ is a positive constant. On the event $\{T\geq n\}$, it is easy to see that $N$ is dominated by a geometric random variable with mean of order $\log(n/t)$. We thus get
\begin{align}\label{eq:enanoumero}
\nonumber&\pr{N\ge  (\log (n/t)+4\log \log n)(\log n)^{1/16}} \\ &\leq \pr{T\leq n} + \pr{N\ge  (\log (n/t)+4\log \log n)(\log n)^{1/16}, T\geq n} \\&
\lesssim \exp\left(-c(\log n)^4\right) + \exp\left(-c(\log n)^{1/16} \right)\lesssim \exp\left(-c(\log n)^{1/16} \right).
\end{align}
Since we have $\estart{\sigma_i' - \sigma_i}{x} \lesssim t$ for all $x\in\B(0,2\sqrt{t})$, by using the Markov property we can deduce 
\begin{align*}
\pr{\sigma'_i-\sigma_i \ge t(\log n)^{1/16}} \le \exp(-c(\log n)^{1/16}).
\end{align*}
Moreover, it follows from \cite[Lemma~4.3]{Bass.Kumagai} and the fact that $\log n\asymp \log t$ that  
$$\pr{\#\{U_{\sigma_i},\dots,U_{\sigma_i+t(\log n)^{1/16}}\}\ge \frac{t}{(\log n)^{7/8} }}
\le \exp(-c(\log n)^{1/16}).$$
Combining the last two inequalities, we get that for any $i$, 
$$\pr{R_i\ge \frac{t}{(\log n)^{7/8}}} \le 2\, \exp(-c(\log n)^{1/16}),$$
where $c$ is a positive constant. Using the assumption that $\log (n/t) \lesssim (\log n)^{3/4}$ together with~\eqref{bound:excursions} and~\eqref{eq:enanoumero} concludes the proof of the lemma.
\end{proof}

\begin{corollary}
\label{cor:range}
Let $U$ be a simple random walk in~$\Z^2$, let $t\leq n$ satisfying $\log (n/(n-t))\lesssim (\log n)^{3/4}$ and let $\varepsilon<1/32$. Then there exists a positive constant $c$ such that 
\[
\prcond{\#\{U_0,\dots,U_{t}\}\cap  \B(0,(\log n)^\varepsilon\sqrt{n-t}) \geq \frac{n-t}{(\log n)^{\frac{1}{16}-2\varepsilon}}}{U_n=0}{} \lesssim  \exp\left(-c(\log n)^{1/16} \right).
\]
\end{corollary}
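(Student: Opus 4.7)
My plan is to derive the stated conditional bound from the unconditional range estimate of Lemma~\ref{lem:prelim2}, passing to the conditional probability via the local central limit theorem.

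I would first invoke Lemma~\ref{lem:prelim2} with its $n$ set equal to our $t$ and its $t$ set equal to $(\log n)^{2\varepsilon}(n-t)$. The two hypotheses are readily verified: the inequality $(\log n)^{2\varepsilon}(n-t)\le t\log t$ follows from $n-t\le n$, and $\log(t/((\log n)^{2\varepsilon}(n-t)))\lesssim (\log t)^{3/4}$ follows from the corollary's assumption $\log(n/(n-t))\lesssim (\log n)^{3/4}$ combined with $\log t\asymp\log n$. The last relation holds in the nontrivial regime $t\gtrsim n/(\log n)^{1/16-2\varepsilon}$, since otherwise the range is trivially bounded by $t+1$, already below the threshold, and the event is empty. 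Using $(\log t)^{1/16}\asymp (\log n)^{1/16}$, Lemma~\ref{lem:prelim2} then yields the unconditional bound $\pr{A}\lesssim \exp(-c(\log n)^{1/16})$, where $A$ denotes the event of the corollary.

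Next, I would pass to the conditional probability via the Markov property at time $t$ combined with the LCLT estimates $p_{n-t}(y,0)\lesssim 1/(n-t)$ and $\pr{U_n=0}\gtrsim 1/n$:
\[
\prcond{A}{U_n=0}{} \;=\; \frac{\E{\1(A)\,p_{n-t}(U_t,0)}}{\pr{U_n=0}} \;\lesssim\; \frac{n}{n-t}\,\pr{A}.
\]

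The hardest part is absorbing the prefactor $n/(n-t)$, which the hypothesis only bounds by $\exp(C(\log n)^{3/4})$, much larger than $\exp(c(\log n)^{1/16})$. I expect to handle this by a case split on $|U_t|$: for $|U_t|$ exceeding a suitable power of $\log n$ times $\sqrt{n-t}$, the Gaussian form $p_{n-t}(y,0)\lesssim (n-t)^{-1}\exp(-c|y|^2/(n-t))$ from the LCLT absorbs the $n/(n-t)$ factor, while for $|U_t|$ small one should retune the parameters playing the roles of $\mu,\nu,\lambda$ (excursion count, excursion length, range per excursion) in the proof of Lemma~\ref{lem:prelim2} so as to exploit the slack afforded by $\varepsilon>0$.
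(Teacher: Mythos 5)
Your opening paragraph is fine: the application of Lemma~\ref{lem:prelim2} with $n$ replaced by $t$ and $\sqrt t$ replaced by $(\log n)^\varepsilon\sqrt{n-t}$ is legitimate in the non-trivial range $t\gtrsim n/(\log n)^{1/16-2\varepsilon}$, and yields the stated unconditional bound. You have also correctly identified the real obstruction in the second paragraph: conditioning at time $t$ introduces the prefactor $n/(n-t)$, which under the hypothesis can be as large as $\exp\bigl(C(\log n)^{3/4}\bigr)$ and therefore swamps the bound $\exp\bigl(-c(\log n)^{1/16}\bigr)$.

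However, the third paragraph does not close this gap. The case $|U_t|\gg\sqrt{(n-t)\log(n/(n-t))}$ is indeed handled by the Gaussian factor in the LCLT, but the complementary case is where the difficulty lives, and there you only gesture at ``retuning the parameters'' of Lemma~\ref{lem:prelim2}. To make that work you would need, roughly, $\pr{A,\,|U_t|\lesssim \sqrt{(n-t)\log(n/(n-t))}}\lesssim \frac{n-t}{n}\,\exp\bigl(-c(\log n)^{1/16}\bigr)$, i.e.\ an approximate factorisation of the event $A$ and the endpoint constraint. That is not at all automatic: $A$ concerns the amount of time the walk spends in a small ball around the origin, and keeping $U_t$ close to the origin, if anything, pushes the walk to spend \emph{more} time there, so one should worry about positive rather than negative correlation. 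Moreover the ``slack afforded by $\varepsilon>0$'' is only a power of $\log n$, nowhere near the stretched-exponential factor $n/(n-t)$ you need to absorb. As written, this is a genuine gap.

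The paper avoids the prefactor issue entirely by a different and cleaner route. It first uses the trivial containment $\{U_0,\dots,U_t\}\subseteq\{U_0,\dots,U_{n/2}\}\cup\{U_{n/2},\dots,U_n\}$, splitting the event into one depending only on the first $n/2$ steps and one depending only on the last $n/2$ steps. For any event $A$ measurable in the first half, the LCLT gives $\prcond{A}{U_n=0}{}\lesssim\pr{A}$ with no prefactor beyond a constant, since $\sup_y p_{n/2}(y,0)\lesssim 1/n\lesssim\pr{U_n=0}$. The second-half event is then turned into a first-half event by reversibility of the bridge. After this reduction, Lemma~\ref{lem:prelim2} (applied with $n$ replaced by $n/2$ and $\sqrt t$ replaced by $(\log n)^\varepsilon\sqrt{n-t}$) finishes the proof. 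The essential idea you are missing is to condition at time $n/2$ rather than at time $t$, precisely so that the LCLT comparison costs only a constant, and to use time reversal to handle the piece of the range that falls after $n/2$.
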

\begin{proof}[\bf Proof] 
First we use the rough bound: 
\begin{eqnarray*}
\#\{U_0,\dots,U_{t}\}\cap  \B(0,(\log n)^{\varepsilon}\sqrt{n-t}) &\le &  \#\{U_0,\dots,U_{n/2}\}\cap  \B(0,(\log n)^\varepsilon\sqrt{n-t}) \\ 
&& +\ \# \{U_{n/2},\dots,U_n\}\cap  \B(0,(\log n)^\varepsilon\sqrt{n-t}).
\end{eqnarray*}
We now note that if $A$ is an event only depending on the first $n/2$ steps of the random walk, then we have
\begin{align}\label{eq:condition}
\prcond{A}{U_n=0}{} = \frac{\prcond{U_n=0}{A}{} \pr{A}}{\pr{U_n=0}} \lesssim \pr{A},
\end{align}
where the last inequality follows from the local central limit theorem. By reversibility we obtain
\begin{align}\label{eq:cond2}
\nonumber \prcond{\# \{U_{n/2},\dots,U_n\}\cap  \B(0,(\log n)^\varepsilon\sqrt{n-t})\geq \frac{n-t}{2(\log n)^{\frac{1}{16}-2\varepsilon}}}{U_n=0}{} \\
= \prcond{\# \{U_{0},\dots,U_{n/2}\}\cap  \B(0,(\log n)^\varepsilon\sqrt{n-t})\geq \frac{n-t}{2(\log n)^{\frac{1}{16}-2\varepsilon
}}}{U_n=0}{}.
\end{align}
The statement now readily follows by combining Lemma~\ref{lem:prelim2} with~\eqref{eq:condition} and~\eqref{eq:cond2}.
\end{proof}

\begin{proof}[\bf Proof of Proposition~\ref{prop:U}]
Let us consider the events  
$$A_k:=\left\{r_U(t_{k+1})-r_U(t_k) \ge \rho\, \frac {t_{k+1}-t_k}{\log n}\right\},$$
with $\rho>0$ some constant to be fixed later.
Let also $\varepsilon<1/48$, 
$$B_k=\left\{\#\{U_0,\dots,U_{t_k}\} \cap \B(0,(\log n)^\varepsilon\sqrt{n-t_k}) \le \frac{t_{k+1}-t_k}{(\log n)^{\frac{1}{16}-2\varepsilon}}\right\},$$
and 
$$\widetilde B_k:=B_k\cap \left\{U_{t_k}\in \B(0,\sqrt{n-t_k})\right\}.$$
Set for $k=1,\dots,(\log n)^{3/4}$, 
$$\GG_k=\sigma(U_0,\dots,U_{t_k}),$$ 
and note that $\widetilde B_k\in \GG_k$. 
\begin{claim}\label{cl:beginproof}
For any~$k\le (\log n)^{3/4}$ we have almost surely
\begin{align*}
\bP(A_k^c\mid \GG_k)\, \1(\widetilde B_k) \lesssim \, \frac{1}{(\log n)^{\epsilon}}\, \1(\widetilde B_k).
\end{align*}
\end{claim}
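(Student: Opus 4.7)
I work conditionally on $\GG_k$, on the event $\widetilde B_k$. By the Markov property of $U$, the shifted process $W_j := U_{t_k+j}$ for $j\ge 0$ is then a 2d simple random walk starting from $U_{t_k}$. Set $m := t_{k+1}-t_k = (n-t_k)/2$ and $S := \{U_0,\dots,U_{t_k}\}$, and decompose
\[
r_U(t_{k+1})-r_U(t_k) \;=\; \#\{W_1,\dots,W_m\} \;-\; \#\big(\{W_1,\dots,W_m\}\cap S\big) \;=:\; R-I.
\]
The plan is to prove $R\ge c_1 m/\log n$ and $I\le c_2 m/\log n$ with $c_2<c_1$, each with conditional probability at least $1-C(\log n)^{-\varepsilon}$ on $\widetilde B_k$, which gives the claim with $\rho := c_1-c_2>0$.

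\textbf{Range lower bound.} By the Jain--Pruitt estimates for the range of 2d simple random walk, $\econd{R}{\GG_k}\asymp m/\log m$ and $\var(R\mid\GG_k)\lesssim m^2/(\log m)^4$. Since $k\le(\log n)^{3/4}$ forces $\log m\asymp\log n$, Chebyshev's inequality yields $\prcond{R<c_1m/\log n}{\GG_k}{}\lesssim 1/(\log n)^2$ for a suitable constant $c_1>0$.

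\textbf{Intersection upper bound.} Write
\[
\econd{I}{\GG_k}=\sum_{x\in S}\prstart{T_x\le m}{U_{t_k}},
\]
where $T_x$ denotes the hitting time of $x$ by $W$. Gaussian tails handle the $x$ at distance more than $(\log n)^{\varepsilon}\sqrt m$ from $U_{t_k}$ (total contribution $\le n\exp(-c(\log n)^{2\varepsilon})=o(1)$). For closer $x$, the standard Green's function estimate gives $\prstart{T_x\le m}{y}\lesssim(1+\log(m/|x-y|^2))/\log m$. On $\widetilde B_k$, the ball $\B(U_{t_k},(\log n)^{\varepsilon}\sqrt m)$ is contained in $\B(0,C(\log n)^{\varepsilon}\sqrt{n-t_k})$; absorbing the constant $C$ into a slightly enlarged $\varepsilon'>\varepsilon$, the number of $S$-points inside is at most $K:=m/(\log n)^{1/16-2\varepsilon'}$. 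Maximizing the resulting sum under $|S'|\le K$ (worst case: $S'$ packed into a disc of radius $\sqrt{K/\pi}$ around $U_{t_k}$) and integrating yields
\[
\econd{I}{\GG_k}\lesssim \frac{K(1+\log(m/K))}{\log m}\lesssim \frac{m\log\log n}{(\log n)^{17/16-2\varepsilon'}}.
\]
Markov's inequality then gives $\prcond{I\ge c_2m/\log n}{\GG_k}{}\lesssim \log\log n/(\log n)^{1/16-2\varepsilon'}\lesssim(\log n)^{-\varepsilon}$, using $\varepsilon<1/48$ (with $\varepsilon'$ chosen close enough to $\varepsilon$).

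\textbf{Main obstacle.} The naive deterministic bound $I\le|S\cap\B(U_{t_k},(\log n)^{\varepsilon}\sqrt m)|\le m/(\log n)^{1/16-2\varepsilon}$ is already much larger than the target $m/\log n$ (since $1/16-2\varepsilon<1$), so a pointwise subtraction of the intersection from the range cannot possibly suffice. The crucial feature is that each nearby site is visited only with probability $\lesssim 1/\log m$ on average: this extra $1/\log m$ in the Green's function estimate restores the missing logarithm and brings $\econd{I}{\GG_k}$ polynomially below $m/\log n$, after which Markov's inequality closes the argument.
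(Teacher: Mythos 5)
Your decomposition $r_U(t_{k+1})-r_U(t_k)=R-I$, the Green's-function estimate for $\econd{I}{\GG_k}$, and the final Markov step are essentially identical to the paper's argument (see the displays labelled \eqref{Ak:step1} and \eqref{Ak:step2} there). The one genuine variation is in the range lower bound: you invoke the Jain--Pruitt mean/variance asymptotics and Chebyshev to get $\prcond{R<c_1m/\log n}{\GG_k}{}\lesssim(\log n)^{-2}$, whereas the paper cites a concentration inequality of Berestycki--Cerf--Rodriguez (\cite[Theorem~1.5]{BCR}) giving the much stronger tail $\exp(-c(\log n)^{1/6})$. Since the intersection term is the bottleneck (it only decays like $(\log n)^{-\epsilon}$), either input suffices, so your substitution is harmless and arguably more elementary. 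One small point you brush over: the ball in the definition of $\widetilde B_k$ is $\B(0,(\log n)^\varepsilon\sqrt{n-t_k})$ (centered at the origin), while your argument naturally produces $\B(U_{t_k},(\log n)^\varepsilon\sqrt m)$; on $\widetilde B_k$ the latter sits inside $\B(0,2(\log n)^\varepsilon\sqrt{n-t_k})$, which is not literally the ball controlled by $B_k$. Your ``enlarge $\varepsilon$ to $\varepsilon'$'' fix works but technically requires redefining $B_k$ (and Corollary~\ref{cor:range}) with the slightly larger exponent, keeping the constraint $\varepsilon'<1/48$ so that $1/16-2\varepsilon'>\varepsilon$ still holds. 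The paper sidesteps this by also clipping the \emph{new} range $\{U_{t_k+1},\dots,U_{t_{k+1}}\}$ to the origin-centered ball (using that the walk stays in $\B(0,(\log n)^\varepsilon\sqrt{n-t_k})$ up to time $t_{k+1}$ with probability $1-\exp(-c(\log n)^{2\varepsilon})$), so the intersection to control is exactly the one bounded by $B_k$.
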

\begin{proof}[\bf Proof]
To prove the claim we use two facts. 
On the one hand it follows from \cite[Theorem 1.5]{BCR} that if $\rho$ is small enough, then a.s.  
\begin{align*}
\bP\left(\#\{U_{t_k+1},\dots,U_{t_{k+1}}\}\le 2\rho\, \frac{t_{k+1}-t_k}{\log n}\ \Big|\  \GG_k\right)\le \exp\left(-c(\log n)^{1/6}\right).
\end{align*}
Moreover, on the event $\{U_{t_k} \in \B(0,\sqrt{n-t_k})\}$, 
with probability at most $\exp(-c(\log n)^{2\epsilon})$ the random walk exits the ball $\B(0,(\log n)^{\epsilon}\sqrt{n-t_k})$ before time $t_{k+1}$. Therefore we obtain on the event~$\{U_{t_k} \in \B(0,\sqrt{n-t_k})\}$ that
\begin{align}
\label{Ak:step1}
\bP\left(\#\{U_{t_k+1},\dots,U_{t_{k+1}}\}\cap \B(0,(\log n)^\varepsilon\sqrt{n-t_k}) \le 2\rho\, \frac{t_{k+1}-t_k}{\log n}\ \Big|\  \GG_k\right)\lesssim \exp\left(-c(\log n)^{2\varepsilon}\right).
\end{align}
Suppose now on the other hand that a point is at distance at least~$r=O(\sqrt{t})$ from $U_{t_k}$. Then it is well known (see for instance~\cite{LawLim}) that 
the probability that the walk hits it 
during the time interval $[t_k,t_k +t]$ is~$O(\log(\sqrt{t}/r)/\log \sqrt{t})$. 
If we apply this with $t=t_{k+1}-t_k$, $r=\sqrt{n-t_k}/\log n$, 
and use that~$\#\B(0,r)\asymp r^2$, we deduce that 
\begin{eqnarray}
\label{Ak:step2}
\nonumber && \econd{\#\{U_0,\dots,U_{t_k}\} \cap \{U_{t_k+1},\dots,U_{t_{k+1}}\}\cap \B(0,(\log n)^\varepsilon\sqrt{n-t_k})}{\GG_k}\1(\widetilde B_k) \\
\nonumber &\lesssim &\left(\frac{t_{k+1}-t_k}{(\log n)^2} +  \frac{(t_{k+1}-t_k)\log \log n}{(\log n)^{17/16-2\epsilon}}\right)\, \1(\widetilde B_k)  \\ 
&\lesssim &\frac{t_{k+1}-t_k}{(\log n)^{1+\epsilon}} \, \1(\widetilde B_k).
\end{eqnarray}
We now have almost surely
\begin{align*}
\prcond{A_k}{\GG_k}{}\1(\til{B}_k) \geq \prcond{\#\{U_{t_k+1},\dots,U_{t_{k+1}}\}\cap \B(0,(\log n)^\varepsilon\sqrt{n-t_k}) > 2\rho\, \frac{t_{k+1}-t_k}{\log n}}{\GG_k}{} \1(\til{B}_k) \\-
\prcond{\#\{U_0,\dots,U_{t_k}\} \cap \{U_{t_k+1},\dots,U_{t_{k+1}}\}\cap \B(0,(\log n)^\varepsilon\sqrt{n-t_k})\geq \rho \frac{t_{k+1} -t_k}{\log n}}{\GG_k}{}\1(\til{B}_k) \\
\geq \left( 1 - \exp\left( -c(\log n)^{1/6}\right) - \frac{c_1}{(\log n)^\epsilon}\right) \1(\til{B}_k),
\end{align*}
where the last inequality follows from 
\eqref{Ak:step1}, \eqref{Ak:step2} and Markov's inequality.
\end{proof}

Next, if we write $Q(\cdot)=\prcond{\cdot}{U_n=0}{}$ for the Doob transform of $U$, then $(U_k)_{k\leq n}$ is a Markov chain under~$Q$. 
We let $\til{A}_k =A_k\cap \{U_{t_{k+1}} \in \B(0,2\sqrt{n-t_{k}})\}$. Then we have almost surely
\begin{align}\label{AkBk+}
Q(A_k\mid \GG_k) \1(\til{B}_k) \geq Q\left(\til{A}_k\;\middle\vert \;\GG_k\right) \1(\til{B}_k) \gtrsim \prcond{\til{A}_k}{\GG_k}{}\1(\til{B}_k)\geq p\1(\til{B}_k),
\end{align}
where the penultimate inequality follows by the local central limit theorem as in~\eqref{eq:condition} and the last inequality from Claim~\ref{cl:beginproof} and the fact that 
\[
\prcond{U_{t_{k+1}} \in \B(0,2\sqrt{n-t_{k}})}{U_{t_k} \in \B(0,\sqrt{n-t_k})}{} \geq c>0.
\]
Then we introduce the process $(M_k)_{k\le (\log n)^{3/4}}$, defined by $M_1=0$ and for $k\ge 2$, 
$$M_k:= \sum_{\ell=1}^{k-1} \, \left\{\1(A_\ell\cap \widetilde B_\ell) - Q(A_\ell\mid \GG_\ell)\, \1(\widetilde B_\ell)\right\}.$$
Note that by construction it is a $(\GG_k)$-martingale, under the measure~$Q$.  
 Since the increments of this martingale are bounded, it follows from Azuma-Hoeffding's inequality that 
for any $\kappa>0$, there exists a positive constant $c$ such that  
\begin{align}
\label{bound:mart}
\prcond{|M_{(\log n)^{3/4}}| \ge \kappa\, (\log n)^{3/4}}{ U_n=0}{}\lesssim \exp(-c (\log n)^{3/4}).
\end{align}
As a consequence of Corollary~\ref{cor:range} we get that
\begin{align}
\label{allBk}
1-\prcond{\cap_{k\le (\log n)^{3/4}}\, B_k}{U_n=0}{}\lesssim \exp(-c(\log n)^{1/16} ).
\end{align}

\begin{claim}\label{cl:endofproof}
There exists a positive constant $c$ such that 
\begin{align*}
\prcond{\sum_{k=1}^{(\log n)^{3/4}}\, \1(U_{t_k}\in \B(0,\sqrt{n-t_k})) \le c(\log n)^{3/4}}{U_n=0}{}
\le \exp(-c(\log n)^{3/4} ).
\end{align*}
\end{claim}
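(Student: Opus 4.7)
The plan is to view $(U_{t_k})_{k \ge 0}$ under $Q$ as a Markov chain, recognize its rescaled version $W_k := U_{t_k}/\sqrt{n-t_k}$ as an approximate Gaussian AR(1) process, and apply a concentration inequality for a suitable Lipschitz functional. Using the Doob formula
\[
Q(U_{t_k}=y \mid U_{t_{k-1}}=x) = \frac{p_{s}(x,y)\,p_{s}(y,0)}{p_{2s}(x,0)}, \quad s = n/2^k,
\]
together with the local CLT for two-dimensional SRW and the identity $t_k - t_{k-1} = n - t_k = (n-t_{k-1})/2$, the conditional law of $U_{t_k}$ given $U_{t_{k-1}}=x$ is approximately Gaussian with mean $x/2$ and covariance $(s/4)I_2$. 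In the rescaled variables, this yields a time-homogeneous AR(1) recursion $W_k \approx W_{k-1}/\sqrt{2} + \frac12 Z_k$ with $Z_k \sim N(0,I_2)$ i.i.d., whose stationary distribution is $N(0,\tfrac12 I_2)$ and under which the event $\{|W_k|\le 1\}$ has stationary probability $1-e^{-1}>0$.

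From the Gaussian transition, the conditional probability $p_k := Q(|W_k|\le 1 \mid W_{k-1}=w)$ equals $P(|Z+w\sqrt 2|\le 2)$ with $Z\sim N(0,I_2)$, which is bounded below a.s.\ by $g(w) := c\exp(-2|w|^2)$ for some $c>0$ (by restricting the Gaussian integral to a ball of radius $\sqrt 2$ around $-w\sqrt 2$). The function $g$ is bounded and Lipschitz on $\R^2$, with positive stationary mean $\mu>0$. Now $\sum_{k=1}^K g(W_k)$, viewed as a functional of the innovations $(Z_1,\dots,Z_K)$, is Lipschitz with constant $O(\sqrt K)$: indeed, the sensitivity of $W_k$ to $Z_j$ decays geometrically as $(1/\sqrt 2)^{k-j}$ for $j\le k$, so each partial derivative in $Z_j$ is $O(1)$ and the sum of squared partials is $O(K)$. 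By Borell--TIS Gaussian concentration,
\[
Q\Bigl(\sum_{k=1}^K g(W_k) \le \mu K/2\Bigr) \le 2\exp(-cK)
\]
for a positive constant $c$, where $K=(\log n)^{3/4}$. Finally, the Doob martingale $\sum_k(\eta_k-p_k)$, with $\eta_k:=\1(|W_k|\le 1)$, has increments bounded by $1$; Azuma--Hoeffding yields $\sum_k \eta_k \ge \sum_k p_k - c'K \ge c''K$ with probability at least $1-\exp(-c'''K)$, which is exactly the form of the claim.

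The main obstacle is transferring the AR(1) Gaussian analysis to the actual lattice-valued Doob-transformed chain $(U_{t_k})$. The local CLT controls the pointwise error in the transition kernels, and since $n-t_k = n/2^k \ge n^{1-o(1)}$ for all $k \le (\log n)^{3/4}$, this error is negligible uniformly over $k$; consequently all of the above estimates carry over with only $1+o(1)$ multiplicative corrections. Verifying this transfer carefully --- in particular, ensuring that the drift $\E_Q[|W_k|^2\mid W_{k-1}]\le |W_{k-1}|^2/2 + O(1)$ and the Lipschitz sensitivity structure survive the approximation uniformly in $k$ --- constitutes the bulk of the technical work.
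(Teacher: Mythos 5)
Your approach is genuinely different from the paper's, and while the overall strategy is plausible, it has a real gap that you yourself flag but do not close: the transfer from the Gaussian AR(1) approximation to the actual lattice-valued Doob-transformed chain. This is not merely a ``$1+o(1)$ multiplicative correction'' on a deterministic estimate --- the target bound $\exp(-c(\log n)^{3/4})$ is an \emph{additive} exponentially small statement, so after coupling the lattice chain with a Gaussian AR(1) chain you must bound the total-variation distance between the two joint laws over all $K=(\log n)^{3/4}$ steps and argue that it is $\ll\exp(-c(\log n)^{3/4})$. The LCLT error in the ratio form of the Doob kernel $p_s(x,y)p_s(y,0)/p_{2s}(x,0)$ is amplified when $|x|\gg\sqrt s$, so one must first truncate to a high-probability region; moreover the Gaussian transition has unbounded support while the lattice walk has bounded increments, so the per-step kernels are not literally TV-close without such a truncation. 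None of this is carried out, and the Borell--TIS inequality you invoke applies to Lipschitz functionals of a genuine Gaussian vector, not directly to the lattice process. So as written the argument is incomplete, even though it could probably be pushed through with substantial extra work.

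The paper takes a far more elementary route that sidesteps all of this. It first uses reversibility of the walk bridge together with the local CLT (exactly as in the proof of Corollary~\ref{cor:range}) to drop the conditioning on $U_n=0$ entirely, reducing the claim to an unconditional statement about $U_{2^k}$ staying in $\B(0,2^{k/2})$. It then replaces the event $\{U_{2^k}\in\B(0,2^{k/2})\}$ by the smaller event $\{v_k>2^k\}$, where $v_k$ is the exit time of $\B(0,2^{k/2})$: the crucial gain is that $\bP(v_{k+1}>2^{k+1}\mid\H_k)\geq\alpha$ holds \emph{almost surely} (not merely on average) by a simple scaling argument for the unconditioned walk. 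With an a.s.\ conditional lower bound in hand, a single application of Azuma--Hoeffding finishes the proof --- no Gaussian approximation, no Doob-kernel analysis, no Borell--TIS. The difficulty you face with $p_k$ (which can be tiny when $|W_{k-1}|$ is large, forcing the Borell--TIS detour to control $\sum_k p_k$) is precisely what the paper's choice of event avoids.
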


\begin{proof}[\bf Proof]
By using reversibility and the local central limit theorem again, it suffices in fact to show the result without conditioning on $U_n=0$, 
and replacing the times $t_k$ by $n-t_k$.  In other words, it suffices to prove that 
\begin{align}
\label{finalgoal}
\pr{\sum_{k=1}^{(\log n)^{3/4}}\, \1(U_{2^k}\in \B(0,2^{k/2})) \le c(\log n)^{3/4}}
\le \exp(-c(\log n)^{3/4} ),
\end{align}
for some $c>0$. This is standard, but for the sake of completeness we give a short proof now. 
We will prove in fact a stronger statement. Call 
$$v_k := \inf \{t\ge 0\ :\ U_t\notin \  \B(0,2^{k/2})\}.$$
Obviously it is sufficient to prove \eqref{finalgoal} with the events 
$\{v_k> 2^k\}$ in place of $\{U_{2^k}\in \B(0,2^{k/2}) \}$.
Set $\H_k=\sigma(U_0,\dots,U_{v_k})$. Then it is well known that we can find a constant $\alpha>0$, 
such that a.s. for any $k$,  
$$\bP(v_{k+1}> 2^{k+1}\mid \H_k)\ge \alpha.$$ 
Then by considering the martingale  
$$M'_k:=\sum_{\ell=1}^k \left(\1(v_\ell > 2^\ell)-\bP(v_\ell>2^\ell\mid \H_{\ell-1})\right),$$
and using the Azuma-Hoeffding inequality the desired estimate follows.
So the proof of the claim is complete.
\end{proof}

By taking $\rho$ and $\kappa$ sufficiently small and using~\eqref{AkBk+}, \eqref{bound:mart}, \eqref{allBk} and Claim~\ref{cl:endofproof} finishes the proof of the proposition.
\end{proof}


\section{Example}\label{sec:example}

In this section we construct the martingale of Lemma~\ref{lem:example}. Before doing so, we recall a self-interacting random walk~$(X,Y,Z)$ in~$\Z^3$ which was mentioned in~\cite{BenKozScha} and is closely related to the random walk of Theorem~\ref{theotransience}; on the first visit to a vertex only~$(X,Y)$ performs a two-dimensional step, while on later visits to the same vertex only~$Z$ changes by $\pm1$ equally likely. Our proof in this case does not apply, or at least another argument is required. Indeed, by looking again at the process $Z$ at the times when $(X,Y)$ moves, we still obtain a martingale, but we do not have a good control on the jumps of this martingale. In particular, up to time $n$, they could be of size of order~$\log n$, which might be a problem as Lemma~\ref{lem:example} shows.

\begin{proof}[\bf Proof of Lemma~\ref{lem:example}]

Define $M_0=0$. Let $(S_k^i)_{k,i}$ be independent (over~$i$) simple random walks on~$\Z$ and let~$(\til{S}_k^i)_{k,i}$ be independent (over~$i$) random walks with jumps that take values~$\pm [\log n]$ equally likely and start from~$0$. 
Let $k_*$ be the first integer such that $n/2^{k_*}\leq (\log n)^2$. We now let
\[
M_k = S_{k}^1 \quad \text{for} \quad k\leq n/2.
\]
We define $t_1$ by
\[
n-t_1 = \frac{n}{2} + \inf\left\{t\geq 0: \left|M_{n/2} +\til{S}_t^1 \right|\leq \log n\right\}.
\]
If $t_1\geq 0$, then we let 
\[
M_{k+n/2} = M_{n/2} + \til{S}_k^1 \quad \text{for}\quad 0\leq k\leq \frac{n}{2} - t_1.
\]
If $t_1<0$, then we let 
\[
M_{k+n/2} = M_{n/2} + \til{S}_k^1 \quad \text{for}\quad 0\leq k\leq \frac{n}{2}.
\]
Suppose that we have defined $t_{\ell}>0$, we now define $t_{\ell+1}$ inductively. We let
\[
M_{k+n-t_\ell} = M_{n-t_\ell} + S_{k}^{\ell+1} \quad \text{for}\quad 0\leq k\leq \frac{t_\ell}{2}
\]
and we also define $t_{\ell+1}$ by
\[
n-t_{\ell+1} = n-\frac{t_\ell}{2} + \inf\left\{t\geq 0: \left|M_{n-t_\ell/2} +\til{S}_t^{\ell+1}  \right|\leq 
\log n  \right\}.
\] 
If $t_{\ell+1}\geq 0$, then we let 
\[
M_{k+n-t_\ell/2} = M_{n-t_\ell/2} + \til{S}_k^{\ell+1} \quad \text{for}\quad 0\leq k\leq \frac{t_\ell}{2} - t_{\ell+1}.
\]
If $t_{\ell+1}<0$, then we let 
\[
M_{k+n-t_\ell/2} = M_{n-t_\ell/2} + \til{S}_k^{\ell+1} \quad \text{for}\quad 0\leq k\leq \frac{t_\ell}{2}.
\]
In this way we define the times $t_\ell$ for all $\ell\leq k^*$, unless there exists $\ell$ such that $t_\ell<0$, in which case we set $t_{m} = 0$ for all $\ell+1\leq m \leq k^*$. If $t_{k^*}>1$, then at time~$n-t_{k^*}+1$ if $d(M_{n-t_{k^*}},0)\neq 0$, then the martingale makes a jump of size~$\pm d(M_{n-t_{k^*}},0)$ equally likely. If $d(M_{n-t_{k^*}},0)=0$, then with probability  $1/(\log n)^2$ it jumps to $\pm [\log n]$, while with probability~$1-1/(\log n)^2$ it stays at~$0$. 
From time $n-t_{k^*}+2$ until time $n$ at every step with probability $1/(\log n)^2$ it jumps to $\pm [\log n]$, while with probability~$1-1/(\log n)^2$ it stays at its current location. 

By the definition of the martingale it follows that it satisfies the assumptions of the lemma. It only remains to check that there exists a positive constant $c$ such that 
$\pr{M_n=0}>c$. We define the events
\begin{align*}
E  = \{M_{n-t_{k^*}+1 =0} \} \,\,\text{and}\,\,
E' = \{ M_{\ell} = 0, \, \text{for all} \, \ell \in \{ n-t_{k^*}+2,\ldots, n\}  \}.
\end{align*}
We now have
\begin{align}\label{eq:firstmn}
\begin{split}
\pr{M_n=0} \geq &\pr{t_1>0,\ldots, t_{k^*}>0, M_{n-t_{k^*}} =0, E, E'} \\&+ \pr{t_1>0,\ldots, t_{k^*}>0, M_{n-t_{k^*}} \neq 0, E, E'}.
\end{split}
\end{align}
By the definition of the times $t_i$, it follows that $t_{i+1}\leq t_i/2$, and hence we deduce that $t_i\leq n/2^i$, which implies that $t_{k^*}\leq n/2^{k^*} \leq (\log n)^2$. We now obtain 
\begin{align}
\label{eq:split}
\begin{split}
&\prcond{E,E'}{t_1>0,\ldots, t_{k^*}>0, M_{n-t_{k^*}}\neq 0}{} \gtrsim\left(1- \frac{1}{(\log n)^2}\right)^{(\log n)^2} \\
& \prcond{E,E'}{t_1>0,\ldots, t_{k^*}>0, M_{n-t_{k^*}}=0}{} \geq \left(1- \frac{1}{(\log n)^2} \right)^{(\log n)^2}.
\end{split}
\end{align} 
Using the estimate for a simple random walk that if $h>0$, then
\[
\prstart{S_k>0, \,\forall k\leq n}{h} \lesssim \frac{h}{\sqrt{n}},
\]
we get for a positive constant $c_1$ that
\begin{align*}
\prcond{t_{\ell+1}>0}{t_\ell>0}{}  
& = 1-\prcond{\inf\{t\geq 0: |M_{n-t_\ell/2} + \til{S}_{t}^{\ell+1}| \leq \log n \} \leq \frac{t_\ell}{2}}{t_\ell>0}{} \\
&\geq  1- \frac{c_1}{\log n}. 
\end{align*}
Hence from~\eqref{eq:firstmn} and~\eqref{eq:split}  together with the above estimate and the fact that $k^*\asymp \log n$, we finally conclude
\begin{align*}
\pr{M_n=0} \gtrsim \left(1-\frac{c_1}{\log n} \right)^{c_2\log n} \cdot \left(1 - \frac{1}{(\log n)^2} \right)^{(\log n)^2} \geq c_3>0
\end{align*}
and this finishes the proof of the lemma. 
\end{proof}

{\textbf{Acknowledgements}}
We thank Kenneth Alexander for useful discussions. B.S. and P.S. also  thank the Theory Group of Microsoft Research for its kind hospitality where part of this work was completed.

\bibliographystyle{plain}
\bibliography{biblio}

\end{document}